\numberwithin{equation}{section}
\newtheorem*{theorem*1}{Theorem A}
\newtheorem*{theorem*2}{Theorem B}
\newtheorem*{proposition*1}{Proposition C}
\newtheorem{lemma}{Lemma}[section]
\newtheorem{remark}[lemma]{Remark}
\newtheorem{theorem}[lemma]{Theorem}
\newtheorem{definition}[lemma]{Definition}
\newtheorem{corollary}[lemma]{Corollary}
\newtheorem*{question*}{Question}
\newtheorem*{assumption*}{Assumption}
\sloppy \theoremstyle{plain}
\newcommand{\Aut}{\operatorname{Aut}}
\newcommand{\End}{\operatorname{End}}
\newcommand{\Hom}{\operatorname{Hom}}
\newcommand{\Irr}{\operatorname{Irr}}
\newcommand{\Ind}{\operatorname{Ind}}
\newcommand{\Coind}{\operatorname{Coind}}
\newcommand{\Rep}{\operatorname{Rep}}
\newcommand{\Id}{\operatorname{Id}}
\newcommand{\Inn}{\operatorname{Inn}}
\newcommand{\C}{\mathbb C}
\newcommand{\GL}{\operatorname{GL}}
\newcommand{\Mn}{\operatorname{M_n}}
\newcommand{\diag}{\operatorname{diag}}
\newcommand{\id}{\operatorname{Id}}
\newcommand{\Ann}{\operatorname{Ann}}
\newcommand{\tr}{\operatorname{Tr}}
\begin{document}
\title{Some conditions from   a  finite  regular semigroup to a finite inverse semigroup}
\keywords{finite semigroup, involutive semigroup, inverse semigroup}
\date{\today}
\author{Chun-Hui wang }
\address{School of Mathematics and Statistics\\Wuhan University \\Wuhan, 430072,
P.R. CHINA}
\email{cwang2014@whu.edu.cn}
\begin{abstract}
In 1978, Munn proved that a bounded complex representation of an inverse semigroup is  semiunitary and completely reducible. We consider the converse question in the finite case. We  provide some sufficient conditions from   a  finite  regular semigroup to a finite inverse  semigroup.
\end{abstract}
\maketitle
\tableofcontents
\section{Some known results and consequences}
\subsection{A brief introduction}
The purpose of this paper is to prove some conditions sufficiently for  a  finite semigroup to be an inverse semigroup.  We mainly  look for  conditions from complex representation theory of finite semigroups.  As  McAlister's early  survey paper \cite{Mc1} wrote, representations of semigroups have studied for a long time by a lot of pioneering mathematicians like Clifford, Munn, Ponisovsky, Lallement, Petrich, Preston, McAlister, etc.  Here we shall mainly  follow  Steinberg's book\cite{Stein} and  \cite{GMS} to approach this theory. For the recent development,   one can see the references in that book or the related papers in arXiv.  To be precise,  in 1978, Munn proved that a bounded complex representation of an inverse semigroup is  semiunitary and completely reducible. For later use, we consider the converse question for finite semigroups.  We state our  main results  in section \ref{theresults}.  For the proofs,  one can see section \ref{proofs}. By using the structures of finite semigroups, we  finally  reduce the proofs for some  Ree's matrix semigroups;  for such semigroups the results can be proved directly.  We also touch the involution on a semigroup. For such part,  one  can  see  some recent   articles,  for examples \cite{CD},\cite{Dol},\cite{Do2},\cite{EaNo},\cite{Ev},\cite{Sc},etc.
\subsection{Notation and conventions}
We mainly follow the notations of  \cite{Stein}  and  \cite{GMS}.
Let $S$ be a finite semigroup. Let $S^1=S\cup\{1\}$ by adjoining the identity if necessary.      For $S$, let $E(S)$ be  the set of idempotent elements of $S$.  Let  $\mathcal{R}$, $\mathcal{L}$, $\mathcal{J}$   denote the usual Green's relations on $S$. For two elements $a, b\in S$,  $a\mathcal{L} b$ if $S^1 a=S^1 b$,  $a\mathcal{R} b$ if $ aS^1 = bS^1$,   $a\mathcal{J} b$ if  $S^1 aS^1=S^1 bS^1$.  For $a\in S$, let $L_a$(resp. $R_a$, $J_a$) denote the set of generators of $S^1a$ (resp. $aS^1$, $S^1aS^1$) in $S$. Note that  $b\in L_a$  implies   $Sa=Sb$.  For  $e\in E(S)$, let $G_e$ denote the group  of the units of the monoid $eSe$. Let $G_e^0=G_e\cup\{0\}$.   Then $G_e=L_e \cap R_e$. Let us write $L_e=\sqcup_{i=1}^{s_e} x_i\circ_e G_e$, $R_e=\sqcup_{j=1}^{t_e} G_e\circ_e y_j$, $J_e=\sqcup_{i=1}^{s_e}\sqcup_{j=1}^{t_e} x_i\circ_e G_e \circ_e y_j$. Follow the notation of \cite[p.71]{Stein}, let  us define $p_{ji}= y_j \Diamond x_i \in G_e^0$. These elements $p_{ji}$  form a matrix  $P=(p_{ji})$ of type $t_e\times s_e$,   called a sandwich matrix of $J_e$.

 An involution on $S$ is a bijective map $\ast: S \longrightarrow S$, such that (1) $a^{\ast \ast}=a$, (2) $(ab)^{\ast}=b^{\ast}a^{\ast}$, for  $a, b\in S$.  In this case, we will call $S$ an involutive semigroup, denoted by $(S,\ast)$. If for each element $a\in S$, there exists a unique element $a^{\ast}$, such that $aa^{\ast}a=a$, $a^{\ast}aa^{\ast}=a^{\ast}$, we call $S$ an inverse semigroup, and  $a^{\ast}$ is called the inverse of $a$.  In this case, $a\longrightarrow a^{\ast}$,  defines an involution on $S$.

 Let $V$ be a finite dimensional vector space over $\C$.  Let $\pi: S \longrightarrow \End_{\C}(V)$ be a semigroup homomorphism.  By convention, if the zero element of $S$(once  it contains) is mapped  onto  the zero endomorphism of $V$,  we  call $(\pi, V)$   a  (complex) representation of $S$. By Krull-Schmidt theorem, $\pi$ can be decomposed as a direct sum of indecomposable representations.  If $\pi$ has no non-zero dimensional null indecomposable component, we  call it a proper representation. Through this article, a representation  always means a proper representation unless otherwise stated.   When we write the action of $\End_{\C}(V)$ at  $V$ on the left (resp. right) side, we will call $(\pi, V)$ a  left (resp. right) representation of $S$. For  two left representations $(\pi_1, V_1)$, $(\pi_2, V_2)$ of $S$, a $\C$-linear map $f$ from $V_1$ to $V_2$ is called an intertwining operator from $V_1$ to $V_2$, or simply a $S$-morphism if it satisfies $f(\pi_1(s)v)=\pi_2(s)f(v)$,  for all $s\in S$,  $v\in V_1$.   Let $\Rep_l(S)$  denote the category of all complex finite dimensional left  representations of $S$.  A left representation $(\pi, V)$ of $S$ is called irreducible if $V\neq 0$ and $V$ has no non-trivial subrepresentation.  Let $\Irr_l(S)$  be the set of all equivalence classes of  left irreducible representations of $S$. We can give the similar notions for right representations.  For $(\pi, V) \in  \Rep_l(S)$, following \cite[p.277]{Stein}, we can define a canonical duality    $D(V)=\Hom(V,\C)$. Then  $(D(\pi), D(V))\in  \Rep_r(S)$. For simplicity,  in the whole text, we only consider left complex representations and a representation means a left  representation, unless otherwise stated.

 Let $(\pi, V)$ be  a  representation   of $S$  of dimension $n$. Let $\C^n$ denote the vector space of column vectors of dimension $n$.   If we choose a basis $\{e_1, \cdots, e_n\}$, then every vector $v=(e_1, \cdots, e_n) \alpha$, with a column vector $\alpha\in \C^n$. Then $\pi(a)v=(e_1, \cdots, e_n) A\alpha$, $A\in \Mn(\C)$. Moreover,  $\pi: S\longrightarrow \Mn(\C); a\longmapsto A$,  is a semigroup homomorphism, called a  matrix representation, denoted also by $(\pi, \C^n)$.

Let $\C[S]$ denote the  semigroup algebra(or called contracted algebra) of $S$ over the complex field  $\C$. A left representation $(\pi, V)$ is a  left $\C[S]$-module. One  defines $\Ann_S (V)=\{s\in S\mid \pi(s) V=0\}$. Follow \cite{Stein}, \cite{GMS}, if $\Ann_S(V)=I_{J}$, for some  regular $\mathcal{J}$-class $J_e$, we say that  $(\pi, V)$ has an apex $J$ or $e$.  By \cite[Thm.5]{GMS}, every irreducible representation $(\pi, V)$ of $S$ has an apex. Moreover, by the works of Clifford, Munn, Pomizovski, etc, one can recover $V$ from certain irreducible representation of $G_e$ by using the $\Ind_{G_e}$ and $\Coind_{G_e}$ functors. (See \cite[Part II]{Stein},  \cite[Thm 7]{GMS}, for details.)
\begin{theorem}[Schur's Lemma]
 Let $\pi$ be an irreducible matrix  representation of $S$ of dimension $n$. Then:
 \begin{itemize}
 \item[(1)] $\pi: \C[S] \longrightarrow \Mn(\C)$,  is surjective,
 \item[(2)] $\dim \Hom_S(\pi, \pi)=1$.
 \end{itemize}
 \end{theorem}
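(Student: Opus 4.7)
The plan is to prove (2) first by the standard Schur-type eigenvalue argument, and then to deduce (1) from Burnside's theorem (equivalently, Jacobson density in the finite-dimensional algebraically closed case).

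For (2), I would take any $T\in \Hom_S(\pi,\pi)$, viewed as an element of $\End_\C(\C^n)$ commuting with every $\pi(s)$, $s\in S$. Because $\C$ is algebraically closed and $n$ is finite, $T$ has at least one eigenvalue $\lambda\in\C$, so the subspace $W=\Ker(T-\lambda I)\subset \C^n$ is nonzero. The commutation $T\pi(s)=\pi(s)T$ immediately implies that $\pi(s)W\subset W$ for every $s\in S$, i.e.\ $W$ is an $S$-subrepresentation of $\C^n$. Since $(\pi,\C^n)$ is irreducible and $W\neq 0$, we must have $W=\C^n$, hence $T=\lambda I$. Therefore $\Hom_S(\pi,\pi)=\C\cdot I$, giving (2).

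For (1), let $A=\pi(\C[S])$, a $\C$-subalgebra of $\Mn(\C)$. I would first check that $\C^n$ is an irreducible $A$-module: any $A$-stable subspace is $\pi(S)$-stable by linearity, so irreducibility of $\pi$ forces the only $A$-stable subspaces to be $0$ and $\C^n$; nondegeneracy $A\cdot \C^n\neq 0$ follows because otherwise $\C^n$ would be a zero subrepresentation, contradicting $V\neq 0$ together with the proper-representation convention. Now I invoke Burnside's theorem: any subalgebra of $\End_\C(\C^n)$ acting irreducibly on the finite-dimensional $\C$-vector space $\C^n$ must coincide with $\End_\C(\C^n)=\Mn(\C)$. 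Applied to $A$, this gives $A=\Mn(\C)$, i.e.\ $\pi:\C[S]\longrightarrow \Mn(\C)$ is surjective. Equivalently, one can combine (2) with the double commutant theorem: by (2), $A'=\C I$, so $A''=\Mn(\C)$, and since $A$ is a finite-dimensional algebra acting irreducibly one has $A=A''$.

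There is no real obstacle here — the argument is entirely classical — but the one subtlety worth flagging is that $S$ (and $\C[S]$) need not contain a unit, so one must not blindly invoke formulations of Schur/Burnside that assume unital actions. The potential gap is precisely the nondegeneracy $A\cdot\C^n\neq 0$, which rules out a degenerate module structure; this is what the proper-representation hypothesis and irreducibility together supply. Once that point is checked, the eigenvalue argument for (2) and the Burnside/density argument for (1) go through verbatim.
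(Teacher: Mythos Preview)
Your proof is correct and is the standard classical argument. The paper does not actually supply its own proof of this statement but merely cites \cite[Thm.~1.2, Thm.~2.3]{Mc1} and \cite[Thm.~5.33]{CP1}, so there is no substantive comparison to make; your eigenvalue argument for (2) and Burnside/density argument for (1) are precisely the classical proof underlying those references, and your attention to the non-unital subtlety (handled via the proper-representation convention) is apt.
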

\begin{proof}
See \cite[Thm.1.2, Thm.2.3]{Mc1} or \cite[Thm.5.33]{CP1}.
\end{proof}
Assume now that  $\C[S]$ is a  semi-simple algebra.  Analogue of complex  representations of $p$-adic groups(cf.\cite{BushH}), for an irreducible representation $\pi=\Ind_{G_e} \sigma, V=\Ind_{G_e} W$, we can define  its contragredient representation $(\check{\pi}=\Ind_{G_e} \check{\sigma}, \check{V}=\Ind_{G_e} \check{W})$. Under the semi-simple assumption on $\C[S]$, for any $(\pi, V)\in  \Rep_l(S)$, we can define its contragredient representation $(\check{\pi}, \check{V})$, up to equivalence.

\subsection{Munn's results and consequences} Let us recall some  definitions from Munn's paper \cite{Mu78}. For a matrix $A\in \Mn(\C)$, let $A^{\ast}$ denote  its conjugate transpose. A matrix $A$ is called preunitary if $AA^{\ast}A=A$. Let $\C^n$ denote the vector space of column vectors of dimension $n$,  endowed  with the canonical Hilbert form. A matrix can be treated as an operator on $\C^n$. As an operator, $A$ is called a partial isometry if $\Vert A\alpha\Vert =\Vert \alpha\Vert $, for any $\alpha \in (\ker A)^{\bot} \subseteq \C^n$.
\begin{lemma}
Let $A$ be a  matrix in $\Mn(\C)$. Then the following  are equivalent:
\begin{itemize}
\item[(1)] $A$ is preunitary;
\item[(2)] As an operator, $A$ is  a partial isometry;
\item[(3)]  $A^{\ast}A$  is an idempotent matrix;
\item[(4)] $AA^{\ast}$ is an idempotent matrix.
\end{itemize}
\end{lemma}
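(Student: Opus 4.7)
The plan is to establish the chain (1) $\Leftrightarrow$ (3), then (1) $\Leftrightarrow$ (4) by an adjoint-symmetry argument, and finally (3) $\Leftrightarrow$ (2) by identifying $A^{\ast}A$ with an orthogonal projection. The unifying idea is that the identity $AA^{\ast}A=A$ is self-adjoint (it is equivalent to $A^{\ast}AA^{\ast}=A^{\ast}$), which explains why (3) and (4) play symmetric roles.

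For (1) $\Rightarrow$ (3) the direct computation $(A^{\ast}A)^{2}=A^{\ast}(AA^{\ast}A)=A^{\ast}A$ suffices. For the converse, I would argue as follows: from $(A^{\ast}A)^{2}=A^{\ast}A$ one gets $A^{\ast}(AA^{\ast}A-A)=0$, so the vector $AA^{\ast}A-A$ lies in $\ker A^{\ast}=(\Im A)^{\bot}$; on the other hand it clearly lies in $\Im A$, so it must be zero. The equivalence (1) $\Leftrightarrow$ (4) is then obtained by applying the already proved equivalence to $A^{\ast}$ in place of $A$, using that $AA^{\ast}A=A$ iff $A^{\ast}AA^{\ast}=A^{\ast}$.

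For (3) $\Rightarrow$ (2), I would use that $A^{\ast}A$ is automatically Hermitian positive semidefinite, hence any idempotent such matrix is an orthogonal projection; since $\Im(A^{\ast}A)=(\ker A)^{\bot}$, this projection is exactly onto $(\ker A)^{\bot}$, and for $\alpha\in(\ker A)^{\bot}$ one has $\|A\alpha\|^{2}=\langle A^{\ast}A\alpha,\alpha\rangle=\langle\alpha,\alpha\rangle$. For (2) $\Rightarrow$ (3), starting from the isometry equation on $(\ker A)^{\bot}$ and polarizing in $\alpha,\beta\in(\ker A)^{\bot}$, I get $\langle A^{\ast}A\alpha-\alpha,\beta\rangle=0$ for all such $\beta$; since $A^{\ast}A\alpha-\alpha$ itself lies in $(\ker A)^{\bot}=\Im A^{\ast}+(\ker A)^{\bot}$, it must vanish, giving $A^{\ast}A\alpha=\alpha$ on $(\ker A)^{\bot}$ and therefore $(A^{\ast}A)^{2}=A^{\ast}A$.

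The step I expect to require the most care is (2) $\Rightarrow$ (3): the partial isometry hypothesis gives only a norm identity, not a polarized inner-product identity, so one has to polarize over $\mathbb{C}$ (taking both $\alpha+\beta$ and $\alpha+i\beta$) and then argue that the residual vector $A^{\ast}A\alpha-\alpha$ lives in the subspace against which it is tested. Once that subtlety is handled, the remaining implications are essentially formal manipulations with the single identity $AA^{\ast}A=A$ and its adjoint.
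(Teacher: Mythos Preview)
Your argument is correct and self-contained. The paper takes a different route: it delegates the equivalence of (2), (3), (4) to a reference (Douglas, \emph{Banach Algebra Techniques in Operator Theory}), and only proves (1) $\Leftrightarrow$ (3) directly. For (1) $\Rightarrow$ (3) the paper does exactly what you do; for (3) $\Rightarrow$ (1) it invokes the polar decomposition $A=UH$ with $H=(A^{\ast}A)^{1/2}=A^{\ast}A$ and a partial isometry $U$, then computes $A^{\ast}AA^{\ast}=H^{3}U^{\ast}=HU^{\ast}=A^{\ast}$. Your kernel/image argument---observing that $AA^{\ast}A-A$ lies simultaneously in $\Im A$ and in $\ker A^{\ast}=(\Im A)^{\bot}$---is more elementary and avoids both the external citation and the polar decomposition machinery. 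One small stylistic point: in your (2) $\Rightarrow$ (3) step, writing ``$(\ker A)^{\bot}=\Im A^{\ast}+(\ker A)^{\bot}$'' is correct but obscures the real reason, which is simply that $A^{\ast}A\alpha\in\Im A^{\ast}=(\ker A)^{\bot}$ and $\alpha\in(\ker A)^{\bot}$, so their difference lies in $(\ker A)^{\bot}$; stating it that way would make the argument read more cleanly.
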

\begin{proof}
By \cite[p.88, Prop.]{Do}, (2)(3)(4) are equivalent.\\
$(1)\Rightarrow (3)$  Since $AA^{\ast}A=A$, $A^{\ast}AA^{\ast}A=A^{\ast}A$ and $AA^{\ast}AA^{\ast}=AA^{\ast}$. \\
$(3)\Rightarrow(1)$ Let $H= (A^{\ast}A)^{\frac{1}{2}}$. As  $A^{\ast}A$ is a positive idempotent matrix, $H= A^{\ast}A$.  Let $A=UH$ be a polar decomposition with   a partial isometry $U$, and $\ker U=\ker H$. So $A^{\ast}AA^{\ast}=H^2HU^{\ast}=HU^{\ast}=A^{\ast}$, and then $AA^{\ast}A=A$.
\end{proof}
 If $A$ is  preunitary, so is $A^{\ast}$.  Let $E= (\ker A)^{\bot}$, $F=(\ker A^{\ast})^{\bot}$. Then $A$ defines a unitary isometry from $E$ to $F$.
\begin{definition}
Let $S$ be a finite semigroup.
\begin{itemize}
\item[(1)] Let  $(\pi, \C^n)$  be a   matrix representation of $S$.
\begin{itemize}
\item[(a)]  If $\pi(a)$ is a preunitary matrix for each element $a$ of $S$,  we call $\pi$ a (matrix) semiunitary representation.
\item[(b)] Let $\ast$ be an involution on $S$.  If $\pi(a^{\ast})=\pi(a)^{\ast}$, for any $a\in S$,  we call $\pi$ a (matrix Hilbert) $\ast$-representation.
\end{itemize}
\item[(2)] For a  representation $(\pi, V)$ of $S$ of dimension $n$,    if there exists a basis $\{e_1, \cdots, e_n\}$,
 under such basis the corresponding matrix representation is a  $\ast$-representation, a semiunitary representation,  a semiunitary $\ast$-representation,   we will call $(\pi, V)$ a $\ast$-representation, a semiunitary representation, a semiunitary $\ast$-representation respectively with respect to the basis $\{e_1, \cdots, e_n\}$.
 \end{itemize}
\end{definition}

\begin{theorem}[Munn]\label{Munn1}
 Every bounded  complex  representation of  an  inverse semigroup is semiunitary with respect to some basis.
\end{theorem}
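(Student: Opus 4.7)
My plan is to produce, in two stages, a Hermitian inner product on $V$ with respect to which $\pi$ becomes a $\ast$-representation, i.e.\ $\pi(a^{\ast})=\pi(a)^{\ast}$ for every $a\in S$. Once this is in hand, the inverse-semigroup identity $aa^{\ast}a=a$ transports immediately to $\pi(a)\pi(a)^{\ast}\pi(a)=\pi(a)$, which is the preunitary condition of the preceding lemma. Writing $\pi$ in an orthonormal basis for the resulting Hilbert structure then gives a semiunitary matrix representation.

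The first stage handles the idempotents. Since $S$ is inverse, $E(S)$ is a commutative semilattice, so $\pi(E(S))$ is a commuting family of idempotent operators on the finite-dimensional space $V$. Commuting idempotents are simultaneously diagonalisable, so $V$ splits as a finite direct sum $V=\bigoplus_{\alpha}V_{\alpha}$ of joint eigenspaces on which every $\pi(e)$ acts as $0$ or as the identity. Equipping each $V_{\alpha}$ with an arbitrary Hermitian inner product and declaring the $V_{\alpha}$ pairwise orthogonal produces a preliminary inner product under which every $\pi(e)$ is an orthogonal projection.

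The second stage refines the $V_{\alpha}$-inner products so that $\pi(a)^{\ast}=\pi(a^{\ast})$ holds for every $a\in S$. For a fixed $a$, set $e=a^{\ast}a$ and $f=aa^{\ast}$; the relations $a=fa=ae$ force $\pi(a)=\pi(f)\pi(a)\pi(e)$, so $\pi(a)$ annihilates $\Im(\pi(e))^{\perp}$, takes values in $\Im(\pi(f))$, and is determined by a linear isomorphism $T_{a}:\Im(\pi(e))\to\Im(\pi(f))$ whose inverse is $T_{a^{\ast}}$. Semiunitarity of $\pi(a)$ is equivalent to $T_{a}$ being a Hilbert-space isometry. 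Exploiting boundedness, I would apply Weyl's unitarian trick to each maximal subgroup $G_{e}\subseteq eSe$---whose $\pi$-image is a bounded group of linear automorphisms of $\Im(\pi(e))$---to replace the inner product on $\Im(\pi(e))$ by a $G_{e}$-invariant one. For elements $a$ outside the group $\mathcal{L}\cap\mathcal{R}$-classes of the $\mathcal{J}$-class of $e$, I would then transport this form along $T_{a}$ to the corresponding $V_{\beta}$, thereby forcing $T_{a}$ to be an isometry by construction.

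The principal obstacle is coherence: two different elements $a,a'\in S$ connecting the same pair of idempotents may a priori give rise to two different transported inner products on the target side. In an inverse semigroup, however, any two such elements differ by multiplication by an element of the associated maximal subgroup, so the $G_{e}$-invariance built in by Weyl's averaging cancels precisely this ambiguity; equivalently, the Rees/sandwich-matrix description of each $\mathcal{J}$-class together with the uniqueness of inverses guarantees that the local transports glue consistently. Once coherence is verified $\mathcal{J}$-class by $\mathcal{J}$-class (the classes being already mutually orthogonal from the first stage), every $\pi(a)$ is a partial isometry and the theorem follows.
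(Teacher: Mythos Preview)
The paper does not actually prove this theorem; it simply cites \cite{Mu78}, so there is no detailed argument here to compare against (Lemma~\ref{inv} later reproduces Munn's reasoning only in the special case of a Sch\"utzenberger representation with identity sandwich matrix).

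Your strategy---simultaneously diagonalise the commuting idempotents to obtain $V=\bigoplus_\alpha V_\alpha$, then use boundedness and a Weyl-type average over maximal subgroups, then transport---is the right shape, and your within-$\mathcal{J}$-class coherence argument (two elements with the same domain and range idempotents differ by an element of $G_e$, which is absorbed by the $G_e$-invariance) is correct. The gap is in the final sentence. The assertion that the $\mathcal{J}$-classes are ``already mutually orthogonal from the first stage'' is false: for idempotents $e'<e$ lying in distinct $\mathcal{J}$-classes one has $\Im(\pi(e'))\subseteq\Im(\pi(e))$, so these ranges are \emph{nested}, not orthogonal, and you cannot choose the inner product on each $\Im(\pi(e))$ independently class by class. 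Concretely, if you average over $G_e$ on $\Im(\pi(e))$ and then separately over $G_{e'}$ on the subspace $\Im(\pi(e'))$, the second averaging may destroy the $G_e$-invariance you just built, since elements $g\in G_e$ act nontrivially on $\Im(\pi(e'))$ via $g\mapsto ge'$. What stage~1 actually made orthogonal is the family of atoms $V_\alpha$, not the ranges $\Im(\pi(e))$; the cross-class coherence must be argued at that finer level. One way to repair it: observe that for any $a\in S$ with $\pi(a)|_{V_\alpha}\neq 0$ and any idempotent $e$ with $\alpha(e)=1$ one has $\pi(a)|_{V_\alpha}=\pi(ae)|_{V_\alpha}$, so the entire $S$-action on $V_\alpha$ factors through the single apex $\mathcal{J}$-class of $V_\alpha$; alternatively, average over the full isotropy group of $V_\alpha$ in the associated groupoid rather than over one chosen $G_e$. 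With either fix the argument goes through, but the justification is this reduction, not orthogonality of $\mathcal{J}$-classes.
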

\begin{proof}
See \cite[Thm.2]{Mu78}.
\end{proof}
\begin{theorem}[Munn]\label{Munn2}
 Every  semiunitary  representation of  an  inverse semigroup is completely reducible.
\end{theorem}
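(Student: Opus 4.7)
My plan is to upgrade the semiunitary hypothesis to a genuine Hilbert $\ast$-structure compatible with the canonical involution of $S$: I shall show that, for any semiunitary representation $\pi$ of an inverse semigroup $S$, the identity $\pi(a^{\ast}) = \pi(a)^{\ast}$ holds for every $a \in S$ (where the right-hand side is the conjugate transpose with respect to the basis witnessing semiunitarity). Once this is established, complete reducibility follows by the classical unitary trick of taking orthogonal complements.

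\textbf{Main step.} The key point is that a semiunitary $\pi$ sends every idempotent of $S$ to an orthogonal projection. Put $P = \pi(e)$ for $e \in E(S)$; the hypotheses give $P^{2} = P$ and $PP^{\ast}P = P$, so also $(P^{\ast})^{2} = P^{\ast}$, and both $PP^{\ast}$ and $P^{\ast}P$ are self-adjoint idempotents, hence orthogonal projections with images $\Im(P)$ and $(\Ker P)^{\perp}$ respectively. Using $(P^{\ast})^{2} = P^{\ast}$ one computes $(PP^{\ast})(P^{\ast}P) = P$, and then for $v \in \Im(P)$ the chain $\Vert v\Vert = \Vert (PP^{\ast})(P^{\ast}P)v\Vert \leq \Vert P^{\ast}Pv\Vert \leq \Vert v\Vert$ forces equalities throughout, so $v \in \Im(P^{\ast}P) = (\Ker P)^{\perp}$; a rank count then yields $\Im(P) = (\Ker P)^{\perp}$, i.e.\ $P = P^{\ast}$. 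Granted this, for a general $a \in S$ the matrices $\pi(a^{\ast})$ and $\pi(a)^{\ast}$ both satisfy the four Moore--Penrose axioms for $\pi(a)$: the identities $\pi(a)X\pi(a) = \pi(a)$ and $X\pi(a)X = X$ come from $aa^{\ast}a = a$, $a^{\ast}aa^{\ast} = a^{\ast}$ on one side and from preunitarity on the other, while the self-adjointness of $\pi(a)X$ and $X\pi(a)$ reduces, for $X = \pi(a^{\ast})$, to the idempotent case just established applied to $aa^{\ast}, a^{\ast}a \in E(S)$, and is immediate for $X = \pi(a)^{\ast}$. Uniqueness of the Moore--Penrose pseudoinverse then gives $\pi(a^{\ast}) = \pi(a)^{\ast}$.

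\textbf{Finish and main obstacle.} With $\pi(a^{\ast}) = \pi(a)^{\ast}$ in hand, any invariant subspace $W \subseteq V$ has invariant orthogonal complement: for $u \in W^{\perp}$ and $w \in W$, $\langle \pi(a)u, w\rangle = \langle u, \pi(a)^{\ast}w\rangle = \langle u, \pi(a^{\ast})w\rangle = 0$, since $W$ is stable under $\pi(a^{\ast})$. Hence $V = W \oplus W^{\perp}$ as $S$-modules, and induction on $\dim V$ produces the decomposition into irreducibles. I expect the idempotent-to-orthogonal-projection step to be the main obstacle: this is the only place where one must reconcile the geometry of the Hermitian form with the purely algebraic identities $P^{2} = P$ and $PP^{\ast}P = P$, and the cleanest argument I see requires the slightly non-obvious factorisation $(PP^{\ast})(P^{\ast}P) = P$ together with the norm-monotonicity chain above; everything thereafter is formal.
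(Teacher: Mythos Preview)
Your argument is correct. The paper does not actually prove this theorem---it simply cites Munn's original paper---but your two-step strategy is precisely the one the paper extracts from Munn in order to prove the immediately following Lemma~\ref{comre}: once $\pi(a^{\ast})=\pi(a)^{\ast}$ is known, the orthogonal-complement argument finishes verbatim. Your bridge from ``semiunitary'' to ``$\ast$-representation'' via the idempotent case and Moore--Penrose uniqueness is sound and is the standard route.

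One minor simplification for the step you flagged as the main obstacle: from $P^{2}=P$ and $PP^{\ast}P=P$ you get $(P^{\ast})^{2}=P^{\ast}$ and $(P^{\ast}P)^{2}=P^{\ast}(PP^{\ast}P)=P^{\ast}P$, and then
\[
(P-P^{\ast}P)^{\ast}(P-P^{\ast}P)
= P^{\ast}P - (P^{\ast})^{2}P - P^{\ast}P^{2} + (P^{\ast}P)^{2}
= P^{\ast}P - P^{\ast}P - P^{\ast}P + P^{\ast}P = 0,
\]
so $P=P^{\ast}P$ is self-adjoint directly, without the factorisation $(PP^{\ast})(P^{\ast}P)=P$ or the norm chain. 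Everything else in your proposal can stand as written.
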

\begin{proof}
See \cite[Thm.1]{Mu78}.
\end{proof}
Following the proof there, we can get the  consequence:
\begin{lemma}\label{comre}
 Every  finite dimensional  $\ast$-representation of  an involutive  semigroup $(S, \ast)$ is completely reducible.
\end{lemma}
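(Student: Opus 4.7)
The plan is to mimic the classical Weyl/Maschke argument, replacing the group-theoretic averaging by the adjoint relation that is built into the $\ast$-representation structure. First, fix a basis $\{e_1, \ldots, e_n\}$ of $V$ with respect to which $(\pi, V)$ is a $\ast$-representation, so that the associated matrix representation satisfies $\pi(a^{\ast}) = \pi(a)^{\ast}$ (conjugate transpose) for every $a \in S$. Declare this basis orthonormal; this endows $V$ with a Hermitian inner product $\langle \cdot, \cdot \rangle$ for which the conjugate transpose $\pi(a)^{\ast}$ coincides with the Hilbert-space adjoint of the operator $\pi(a)$. Hence $\pi(a^{\ast})$ is literally the adjoint of $\pi(a)$ as an operator on the Hilbert space $V$.

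The crucial step is to show that the orthogonal complement of any $S$-invariant subspace is itself $S$-invariant. Given such $W \subseteq V$, and given $v \in W^{\perp}$, $w \in W$, $a \in S$, one computes
\[
\langle \pi(a) v, w \rangle \;=\; \langle v, \pi(a)^{\ast} w \rangle \;=\; \langle v, \pi(a^{\ast}) w \rangle \;=\; 0,
\]
the last equality because $\pi(a^{\ast}) w \in W$. Thus $\pi(a) v \in W^{\perp}$, giving an $S$-module decomposition $V = W \oplus W^{\perp}$.

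Complete reducibility now follows by induction on $\dim V$. If $V$ is irreducible there is nothing to prove; otherwise pick a nonzero proper $S$-invariant subspace $W$ and split $V = W \oplus W^{\perp}$. Choose orthonormal bases of $W$ and of $W^{\perp}$; with respect to these bases the restricted representations are again $\ast$-representations, since for $w, w' \in W$ one has $\langle \pi(a)|_W w, w'\rangle = \langle w, \pi(a^{\ast})|_W w'\rangle$, so $(\pi(a)|_W)^{\ast} = \pi(a^{\ast})|_W$ in the induced inner product, and an analogous statement holds on $W^{\perp}$. The inductive hypothesis therefore applies to both summands, and $V$ decomposes into irreducibles.

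There is no serious obstacle; the only subtle point is the verification that $W$ and $W^{\perp}$ are themselves $\ast$-representations in the sense of the paper's definition, i.e.\ with respect to some basis, and this is precisely the compatibility of the adjoint with restriction to orthogonal summands noted above. Once the matrix condition $\pi(a^{\ast}) = \pi(a)^{\ast}$ is interpreted as a Hilbert-space adjoint identity, the rest is linear algebra.
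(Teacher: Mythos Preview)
Your proof is correct and follows essentially the same route as the paper: fix the basis in which $\pi(a^{\ast})=\pi(a)^{\ast}$, use the associated Hermitian form on $\C^n$, and show that the orthogonal complement of any subrepresentation is $S$-stable via $\langle \pi(a)v,w\rangle=\langle v,\pi(a^{\ast})w\rangle$. The paper is slightly terser---it stops after observing $V=W\oplus W^{\bot}$ as $S$-representations, since the existence of an invariant complement for every invariant subspace already forces complete reducibility, so your verification that the summands are again $\ast$-representations is not strictly needed (though it does no harm).
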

\begin{proof}
Let $(\pi, V)$ be a finite dimensional  $\ast$-representation of  $(S, \ast)$ of dimension $n$, $n\geq 1$. For simplicity, assume $V=\C^n$, and $\pi$ is a matrix representation. For any $v\in V$, let $v^{\ast}$ denote its conjugate transpose. For any sub-representation  $0\neq W\subseteq V$, let $W^{\bot}=\{ v\in V \mid v^{\ast} w=0, \textrm{ for all } w\in W\}$. For any $s\in S$, $v\in W^{\bot}$, $w\in W$, $[\pi(s)v]^{\ast} w=v^{\ast} \pi(s)^{\ast}w=v^{\ast} [\pi(s^{\ast})w]=0$, so $\pi(s)v \in W^{\bot}$, $ W^{\bot}$ is  $S$-stable. Moreover, $V=W^{\bot} \oplus W$ as $S$-representations. Hence $(\pi, V)$ is completely reducible.
\end{proof}
Analogue of unitary representations of groups, we can  obtain:
\begin{lemma}\label{uniint}
Let  $(\pi, V)$, $(\pi',V')$ be two equivalent   $\ast$-representations of $(S, \ast)$. Then $(\pi,V)$, $(\pi',V')$ can be  equivalent by a unitary intertwining operator.
\end{lemma}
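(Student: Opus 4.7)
The plan is to imitate the classical argument that equivalent unitary representations of a group are unitarily equivalent. Starting from any invertible $S$-intertwiner $T:V\to V'$ (which exists by hypothesis), I would form $T^{\ast}T$ using the Hilbert inner products supplied by the distinguished bases with respect to which $\pi$ and $\pi'$ are $\ast$-representations, extract a positive square root $P=(T^{\ast}T)^{1/2}$, and set $U=TP^{-1}$. The goal is then to verify that $U$ is a unitary $S$-intertwiner from $V$ to $V'$.

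The key linear-algebraic step is to show that the Hilbert adjoint $T^{\ast}:V'\to V$ is itself an $S$-intertwiner. Applying the conjugate transpose to the identity $T\pi(s)=\pi'(s)T$ gives $\pi(s)^{\ast}T^{\ast}=T^{\ast}\pi'(s)^{\ast}$; the $\ast$-representation property on both sides rewrites this as $\pi(s^{\ast})T^{\ast}=T^{\ast}\pi'(s^{\ast})$, and since $s\mapsto s^{\ast}$ is a bijection on $S$, $T^{\ast}$ indeed intertwines. It follows that $T^{\ast}T\in\End_S(V)$ and, since $T$ is invertible, that $T^{\ast}T$ is positive definite.

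I would then invoke the spectral theorem to define $P=(T^{\ast}T)^{1/2}$. Because $P$ is a polynomial in $T^{\ast}T$, it again belongs to $\End_S(V)$ and is invertible. Setting $U:=TP^{-1}$, one computes $U^{\ast}U=P^{-1}(T^{\ast}T)P^{-1}=I_V$, so $U$ is an isometry and hence unitary (the dimensions of $V$ and $V'$ coincide), and $U\pi(s)=T\pi(s)P^{-1}=\pi'(s)TP^{-1}=\pi'(s)U$, as required. The only delicate point is to work with the specific Hilbert structures witnessing the $\ast$-representation property for both $\pi$ and $\pi'$ (rather than arbitrary inner products), so that the identities $\pi(s)^{\ast}=\pi(s^{\ast})$ and $\pi'(s)^{\ast}=\pi'(s^{\ast})$ genuinely hold in matrix form; once this is arranged, everything reduces to elementary spectral theory and I anticipate no further obstacle.
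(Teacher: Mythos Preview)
Your proposal is correct and follows essentially the same route as the paper: both arguments take the polar decomposition $T=UH$ with $H=(T^{\ast}T)^{1/2}$, use the $\ast$-representation property to show $T^{\ast}$ (hence $T^{\ast}T$ and $H$) intertwines, and conclude that the unitary part $U$ is an $S$-intertwiner. The only cosmetic difference is that the paper phrases $H$ as a limit of polynomials in $T^{\ast}T$ while you (correctly, in finite dimensions) note it is an honest polynomial in $T^{\ast}T$.
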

\begin{proof}
Let $T: V \longrightarrow V'$ be an equivalent $S$-isomorphism. For $s\in S$, $T\circ \pi(s)=\pi'(s) \circ T$. Then $T^{\ast} \circ \pi'(s)^{\ast}=\pi(s)^{\ast}\circ T^{\ast}$, $T^{\ast} \circ \pi'(s^{\ast})=\pi(s^{\ast})\circ T^{\ast}$. Let us write $H=(T^{\ast}T)^{\frac{1}{2}}$, and $T=UH$, for some unitary operator $U$ from $V$ to $V'$. Hence $T^{\ast}T\circ  \pi(s)=T^{\ast}\circ \pi'(s) \circ T=\pi(s)\circ T^{\ast}T$. As $H$ is a limit of some polynomials in $H^2$, $H \circ  \pi(s) =\pi(s) \circ H$. Hence
$T\circ \pi(s)=UH\circ \pi(s)=U\circ \pi(s) \circ H=\pi'(s) \circ UH$. As $H$ is surjective, $U\circ \pi(s) =\pi'(s) \circ U$, for all $s\in S$.
\end{proof}
\begin{lemma}\label{finast}
Let $S$ now be  a finite group, and $\ast$ be an involution on $S$. Then the following are equivalent:
\begin{itemize}
\item[(1)] Every finite dimensional  representation of $(S, \ast)$ is isomorphic to a  $\ast$-representation;
\item[(2)]   $\check{\pi} \simeq D(\pi)\circ \ast$, for any $(\pi, V)\in  \Irr_l(S)$;
\item[(3)]  $\check{\pi} \simeq D(\pi)\circ \ast$, for any $(\pi, V)\in  \Rep_l(S)$.
\end{itemize}
\end{lemma}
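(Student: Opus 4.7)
The plan is to prove the cycle $(1)\Rightarrow (3)\Rightarrow (2)\Rightarrow (1)$; the first two implications are formal, and the last carries the substantive content. $(3)\Rightarrow (2)$ is immediate, since irreducibles are representations. For $(2)\Rightarrow (3)$, I will use that $\C[S]$ is semisimple (Maschke, as $S$ is a finite group), so every $(\pi,V)\in \Rep_l(S)$ is a finite direct sum of irreducibles; both $\check{\cdot}$ and $D(\cdot)\circ\ast$ are constructed pointwise on the dual $D(V)$ and commute with finite direct sums, so the statement for all $\pi$ reduces to the irreducible case.

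For $(1)\Rightarrow (3)$, let $(\pi,V)$ be given; by $(1)$, after passing to an isomorphic representation, there is a positive-definite Hermitian inner product $\langle\cdot,\cdot\rangle$ on $V$ satisfying $\langle\pi(s)v,w\rangle=\langle v,\pi(s^{\ast})w\rangle$ for all $s,v,w$. Define the antilinear bijection $\Phi\colon V\to D(V)$ by $\Phi(v)(w)=\langle w,v\rangle$; a direct check using $\ast$-invariance yields $\Phi(\pi(s)v)=(D(\pi)\circ\ast)(s)\,\Phi(v)$, so $\Phi$ is a conjugate-linear intertwiner from $\pi$ to $D(\pi)\circ\ast$. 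In parallel, averaging over the finite group $S$ produces a positive-definite Hermitian form invariant under the ordinary group action (i.e., under $s\mapsto s^{-1}$), whose associated antilinear bijection $\Psi\colon V\to D(V)$ intertwines $\pi$ with $\check{\pi}$. The $\C$-linear composite $\Phi\circ\Psi^{-1}\colon D(V)\to D(V)$ is then an $S$-isomorphism from $\check{\pi}$ onto $D(\pi)\circ\ast$.

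The heart of the proof is $(2)\Rightarrow (1)$. By complete reducibility it suffices to treat an irreducible $(\pi,V)\in \Irr_l(S)$ and exhibit a positive-definite Hermitian form $H$ on $V$ with $H(\pi(s)v,w)=H(v,\pi(s^{\ast})w)$. The space of sesquilinear forms satisfying this identity is naturally identified with $\Hom_S(\overline{\pi},D(\pi)\circ\ast)$; combining the standard unitary isomorphism $\overline{\pi}\simeq\check{\pi}$ for a finite group with the hypothesis $\check{\pi}\simeq D(\pi)\circ\ast$ of $(2)$, Schur's Lemma gives that this space is one-dimensional over $\C$. Let $B$ be a nonzero element; the form $B^{\dagger}(v,w):=\overline{B(w,v)}$ is again $\ast$-invariant, so $B^{\dagger}=\lambda B$ for some $\lambda$ with $|\lambda|=1$, and rescaling $B$ by a unit-modulus $c$ with $c/\bar{c}=\lambda$ yields a Hermitian invariant form $H$. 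The radical of $H$ is $S$-stable and so is zero by irreducibility, making $H$ non-degenerate.

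The delicate step, and the main obstacle, is promoting $H$ from non-degenerate Hermitian to positive-definite, so that $H$ furnishes an orthonormal basis of $V$ in which $\pi$ becomes a $\ast$-representation. My strategy is to fix a positive-definite Hermitian form $H_0$ on $V$ invariant under the ordinary group action of $S$ (by averaging), write $H(v,w)=H_0(Av,w)$ for the unique $H_0$-self-adjoint operator $A$, and translate the $\ast$-invariance of $H$ into an intertwining identity for $A$ between $\pi$ and the twisted representation $\pi\circ\phi$, where $\phi:=\ast\circ\iota$ is the automorphism of $S$ attached to $\ast$. I would then argue, using Schur and the hypothesis $(2)$, that the spectrum of $A$ lies on one side of the real axis, so that a real rescaling of $H$ produces the desired positive-definite $\ast$-invariant form. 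The case of general $(\pi,V)\in \Rep_l(S)$ then follows by taking an orthogonal direct sum of these irreducible $\ast$-structures; it is this positivity argument where the hypothesis $(2)$ on $\ast$ must be used in its full strength.
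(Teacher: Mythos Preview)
Your identification of the positivity step in $(2)\Rightarrow(1)$ as the crux is exactly right, and your setup up to that point is clean: the $\ast$-invariant Hermitian form $H$ exists and is unique up to a real scalar, and writing $H(v,w)=H_0(Av,w)$ with $A$ self-adjoint for $H_0$ yields $A\,\pi(s)=\pi(\phi(s))\,A$ where $\phi=\ast\circ\iota\in\Aut(S)$, $\phi^2=\id$. Schur then gives $A^2=c\,\Id$ with $c>0$, so the eigenvalues of $A$ lie in $\{\pm\sqrt{c}\}$. However, the conclusion that ``the spectrum of $A$ lies on one side'' does \emph{not} follow: the eigenspaces $V_{\pm}=\ker(A\mp\sqrt{c}\,\Id)$ are not $\pi$-stable (from $A\,\pi(s)v=\sqrt{c}\,\pi(\phi(s))v$ for $v\in V_+$ one sees $\pi(s)V_{+}\subseteq V_{+}$ only if $\pi(\phi(s))$ and $\pi(s)$ agree on $V_+$), so irreducibility of $\pi$ gives no leverage. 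Concretely, take $S=Q_8$ and $s^{\ast}=is^{-1}i^{-1}$: here $\phi$ is inner, so $(2)$ holds for every irreducible, yet $j^{\ast}=j$ while $j^2=-1$, so for the faithful $2$-dimensional irreducible no basis makes $\pi(j)$ Hermitian (a Hermitian matrix cannot square to $-\Id$). In the standard unitary model one finds $A=r\,\diag(-1,1)$ for some $r\in\R^{\times}$, so $H$ has signature $(1,1)$ and no real rescaling is positive.

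For comparison, the paper's proof of $(2)\Rightarrow(1)$ follows a parallel route---a unitary intertwiner $A$ with $\pi(s^{\ast})=A\,\pi(s^{-1})A^{-1}$, normalized so that $A^2=\Id$---and then invokes $B\in\GL_n(\C)$ with $BB^{\ast}=A$. But a unitary involution $A$ is Hermitian with spectrum contained in $\{\pm1\}$, and $BB^{\ast}=A$ is solvable only when $A$ is positive definite, i.e.\ $A=\Id$. The same $Q_8$ example gives $A=\pm\diag(1,-1)$, so that step fails there too. In short, your ``delicate step'' is not merely delicate: the obstruction is genuine, and the implication $(2)\Rightarrow(1)$ cannot be closed by the argument you sketch (nor by the paper's) without an additional hypothesis on $\ast$ ruling out indefinite signature.
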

\begin{proof}
 By the semi-simplicity of complex representations of a finite group, $(2) \Leftrightarrow (3)$. \\ Note that $1^{\ast}=(11)^{\ast}=1^{\ast}1^{\ast}$, so $1^{\ast}=1$. As a consequence, $1=(aa^{-1})^{\ast}=(a^{-1})^{\ast}a^{\ast}$, which implies $(a^{\ast})^{-1}=(a^{-1})^{\ast}$.  \\
$(1)\Rightarrow(2)$
Assume  that a finite dimensional representation  $\pi$ is isomorphic to a matrix $\ast$-representation $\pi'$. For  representation $\pi$, let us denote by $\chi_{\pi}$ the character of $\pi$. Then for any $s\in S$,
$$\chi_{D(\pi)\circ \ast}(s)= \chi_{D(\pi)}(s^{\ast}) =\chi_{\pi}(s^{\ast}) =\chi_{\pi'}(s^{\ast})=\overline{\chi_{\pi'}(s)}=\overline{\chi_{\pi}(s)}=\chi_{\check{\pi}}(s).$$
So $D(\pi)\circ \ast \simeq \check{\pi}$.

$(2)\Rightarrow(1)$
 Let $\iota$ denote the inverse involution on $S$.  For   any $(\pi, V)\in  \Irr(S)$, $D(\pi)\circ \iota \simeq \check{\pi} \simeq D(\pi)\circ \ast$, so $\pi\circ \ast \simeq \pi\circ \iota$.  Assume now  that  $V=\C^n$, and $\pi$  is an   irreducible  unitary matrix representation of $S$. Then $\pi\circ \iota$, $\pi\circ \ast$  are two equivalently irreducible unitary anti-representations of $S$, so there exists a unitary matrix $A\in \GL_n(\C)$ such that  $\pi(s^{\ast})A=A\pi(s^{-1})$, i.e. $\pi(s^{\ast})=A\pi(s^{-1})A^{-1}$. Moreover, $\pi(s)=\pi(s^{\ast\ast})=A\pi((s^{\ast})^{-1})A^{-1}=A\pi((s^{-1})^{\ast}))A^{-1}=A^2\pi(s)A^{-2}$, for all $s\in S$. Hence $A^2=c\Id$, for some $c\in \C^{\times}$. Since $A$ is a unitary matrix, $c\overline{c}=1$. So $(\sqrt{c}^{-1} A)^2=\id$. Then  $\mid\sqrt{c}\mid=1$, and $\sqrt{c}^{-1} A$ is also a unitary matrix.  Replace $A$ by $ \sqrt{c}^{-1} A$.

 Finally, assume that $A$ is a unitary matrix, and $A^{2}=\id$, $\pi(s^{\ast})=A\pi(s^{-1})A^{-1}$. In this case, let $B\in \GL_n(\C)$ such that $BB^{\ast}=A=A^{\ast}=A^{-1}$. We let $\pi'(s) =B^{-1}\pi(s)B$. Then $\pi'(s^{\ast})=B^{-1}\pi(s^{\ast})B=B^{-1}A\pi(s^{-1})A^{-1}B=B^{\ast}\pi(s^{-1})(B^{\ast})^{-1}=B^{\ast}\pi(s)^{\ast}(B^{-1})^{\ast}=\pi'(s)^{\ast}$. For a non-irreducible representation, it reduces to consider its irreducible components.
\end{proof}
\begin{lemma}
\begin{itemize}
\item[(1)] If $S$ is a finite abelian group  or a finite group of odd order,   the involution satisfying  above conditions is  the inverse map.
\item[(2)] If $S$ is a finite  symmetric group or a finite simple group,   the involution satisfying  above conditions is just the inverse map composited with certain inner automorphism.
\end{itemize}
\end{lemma}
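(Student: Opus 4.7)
The plan is to recast the condition on $\ast$ as a condition on a naturally associated group automorphism, and then treat each family of groups in turn. Any involution $\ast$ on a group $S$ factors uniquely as $\ast = \phi \circ \iota$, where $\iota \colon s \mapsto s^{-1}$ and $\phi := \ast \circ \iota$ is a group automorphism (the composite of two anti-automorphisms). Since $\phi$ commutes with $\iota$, the relation $\ast^2 = \id$ forces $\phi^2 = \id$. Using $\check{\pi} = D(\pi) \circ \iota$ for group representations, condition $(2)$ of Lemma \ref{finast} rewrites as $\check{\pi} \simeq \check{\pi \circ \phi}$, equivalently $\pi \simeq \pi \circ \phi$, for every $\pi \in \Irr_l(S)$. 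By character theory this is precisely the statement that $\phi$ is \emph{class-preserving}: $\phi(s)$ is $S$-conjugate to $s$ for every $s \in S$. Thus the problem reduces to classifying the class-preserving automorphisms of $S$ of order dividing two.

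For part $(1)$, the abelian case is immediate since conjugacy classes are singletons. For the odd-order case I will form the semidirect product $\tilde S = S \rtimes \langle \phi \rangle$. Since $|S|$ is odd, $\langle \phi \rangle$ is a Sylow $2$-subgroup of $\tilde S$, so by Sylow's theorem all involutions of $\tilde S$ are $S$-conjugate to $\phi$. An element $y\phi \in S\phi$ is an involution precisely when $\phi(y) = y^{-1}$; under the class-preserving hypothesis we then have $y^{-1} = \phi(y) \sim y$, and the standard fact that no non-identity element of a group of odd order is conjugate to its inverse (if $hyh^{-1} = y^{-1}$ then $h^2 \in \Cent_S(y)$, and $|h|$ odd gives $h \in \langle h^2 \rangle \subseteq \Cent_S(y)$, whence $y = y^{-1}$ and then $y = 1$) forces $y = 1$. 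So $\phi$ is the only involution in $S\phi$, its $S$-conjugacy class in $\tilde S$ is a singleton, $\Cent_S(\phi) = S$, and $\phi = \id$; hence $\ast = \iota$.

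For part $(2)$, when $S = S_n$ I will use that $\Aut(S_n) = \Inn(S_n)$ for $n \neq 6$ (the cases $n \leq 2$ being abelian and covered by part $(1)$), while for $n = 6$ the non-trivial outer automorphism interchanges the conjugacy classes of cycle types $(2, 1^4)$ and $(2, 2, 2)$ and so fails to be class-preserving. In either subcase $\phi$ must be inner, and $\ast$ is the inverse map composed with an inner automorphism. When $S$ is a finite simple group I will invoke the theorem (requiring the classification of finite simple groups) that every class-preserving automorphism of a non-abelian finite simple group is inner, which again yields $\phi \in \Inn(S)$.

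The main obstacle I anticipate is the odd-order case of $(1)$: the reduction to class-preserving automorphisms is formal, but the Sylow/involution trick in $S \rtimes \langle \phi \rangle$ is the decisive non-obvious step. The simple-group half of $(2)$ is technically deep but, in keeping with the style of the paper, is best handled by citing a known CFSG-based result rather than reproving it case by case.
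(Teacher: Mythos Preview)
Your proposal is correct. The initial reduction---writing $\ast=\phi\circ\iota$ with $\phi\in\Aut(S)$, $\phi^2=\id$, and showing that the representation-theoretic condition is equivalent to $\phi$ being class-preserving---is exactly what the paper does (there $\varphi(s)=(s^{-1})^{\ast}$), and your treatments of the abelian and symmetric-group cases coincide with the paper's.

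For the odd-order case your argument is genuinely different. The paper argues directly inside $S$: if $\varphi\neq\id$ then its fixed-point subgroup $H=\{s:\varphi(s)=s\}$ is proper, so the union of conjugates of $H$ cannot cover $S$; choosing $x_0$ outside this union gives a conjugacy class $[x_0]$ on which the class-preserving involution $\varphi$ acts without fixed points, forcing $|[x_0]|$ even, which contradicts $|[x_0]|\mid|S|$. Your route via $\tilde S=S\rtimes\langle\phi\rangle$ instead reduces to the fact that an odd-order group has no non-trivial real elements. Both are short and self-contained; the paper's version avoids building the auxiliary group, while yours isolates the ``no real elements'' phenomenon cleanly. (Note incidentally that your Sylow step is superfluous: once you show $\phi$ is the \emph{only} involution in $S\phi$, and $S$ has none, conjugates of $\phi$ are involutions and hence equal $\phi$, giving $\Cent_S(\phi)=S$ directly.)

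For non-abelian finite simple groups you correctly invoke the CFSG-dependent theorem that every class-preserving automorphism is inner. The paper here cites a reference for the stronger claim $\Aut(S)=\Inn(S)$, which is not true in general (already $\Aut(A_5)\cong S_5$); your formulation is the accurate one, and is what is actually required.
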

\begin{proof}
  Let $\varphi(s)=(s^{-1})^{\ast}$, for $s\in S$. Since $(s^{-1})^{\ast}=(s^{\ast})^{-1}$,  $\varphi\in \Aut(S)$ and $\varphi^2=1$.  Moreover, for any $\pi\in \Irr(S)$, $\pi^{\varphi} \simeq \pi$.\\
  (1) (a) If $S$ is an abelian group, for any $\pi\in \Irr(S)$, $\pi^{\varphi}(s)=\pi(s)$. Then $\varphi(s)=s$, and $\ast=-1$.\\
  (b) If $S$ is a finite group of odd order, we let $H=\{ s\mid \varphi(s)=s\}$. Then $H$ is a subgroup of $S$.  For $x\in S$, let $[x]=\{ sxs^{-1}  \mid s\in S\}$. For any $\pi\in \Irr(S)$, $\pi^{\varphi}(s)=\pi(s)$, so  each $[x]$ is $\varphi$-stable.   Since all conjugates of the finite group $H$ can not cover the whole finite group $S$, there exists at least one $x_0$ such that  $[x_0]\cap H=\emptyset$.
   Assume that $\varphi $ has order $2$. Then the cardinality $\#[x_0]$  of $[x_0]$ is even, which contradicts to $\# [x_0]\mid |S|$. \\
  (2) (a) If  $S=S_n$, a symmetric group on $n$ letters.  By \cite[Coro.7.7]{Rot},  $\Aut(S_n)=\Inn(S_n)$ for   $n\neq  2$, $n\neq 6$; for $n\neq  2,6$,  $\varphi(s)=asa^{-1}$, and $s^{\ast}=as^{-1}a^{-1}$, for certain $a\in S$.  In case $n=2$, $\Aut(S_2)=\{1\}$, the result also holds. In case $n=6$,  if $\varphi$ is an outer automorphism, by \cite[Coro.7.6]{Rot} $\varphi$ sends  a transposition of $S_6$ to a product of three disjoint  transpositions. But $\varphi$  preserves the conjugate classes of $S_6$,  it is impossible. \\
  (b) If $S$ is a non-abelian simple group, by \cite[Thm.7.14]{Rot}, $\Aut(S)=\Inn(S)$.
\end{proof}
\begin{lemma}
Let $S$ now be  a finite group, and $\ast$ is an involution on $S$. If every finite dimensional  representation of $(S, \ast)$ is a unitary $\ast$-representation with respect to some basis, then the involution is just the inverse map.
\end{lemma}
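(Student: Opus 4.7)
The plan is to combine the two pieces of structure carried by a unitary $\ast$-representation---that each $\pi(s)$ is an isometry and that $\ast$ corresponds to the operator adjoint---within a single basis. That will force $\pi(s^{\ast})=\pi(s^{-1})$ at the matrix level, after which faithfulness of some representation upgrades this to $s^{\ast}=s^{-1}$ in $S$.

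Concretely, I fix an arbitrary finite dimensional $(\pi,V)$ and invoke the hypothesis to choose a basis in which $\pi$ is simultaneously unitary and a $\ast$-representation. In that basis both $\pi(s)\pi(s)^{\ast}=\Id$ and $\pi(s^{\ast})=\pi(s)^{\ast}$ hold. Because $S$ is a group, every $\pi(s)$ is invertible (one has $\pi(s)\pi(s^{-1})=\Id$), so the unitarity relation gives $\pi(s)^{\ast}=\pi(s)^{-1}=\pi(s^{-1})$; combining yields the basis-free identity $\pi(s^{\ast})=\pi(s^{-1})$ for all $s\in S$. Applied to any faithful representation---for instance the regular representation, which is finite dimensional and so admits such a basis by hypothesis, or equivalently the direct sum of one copy of each irreducible---this identity in $\End_{\C}(V)$ lifts to the equality $s^{\ast}=s^{-1}$ in $S$.

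The argument is short, and there is no serious obstacle. The only point worth flagging is that the hypothesis is genuinely stronger than that of Lemma \ref{finast}: it demands a \emph{common} basis realizing both the unitary and the $\ast$-conditions. Without such a common basis one could not substitute $\pi(s)^{\ast}=\pi(s)^{-1}$ into $\pi(s^{\ast})=\pi(s)^{\ast}$, and the rigidity that forces $\ast$ to coincide with inversion would be lost; this is precisely what distinguishes the present lemma from the weaker equivalence already established.
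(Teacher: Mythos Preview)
Your proof is correct and essentially the same as the paper's: both fix a faithful finite-dimensional representation, realize it as a unitary $\ast$-representation in some basis, and combine $\pi(s)\pi(s)^{\ast}=\Id$ with $\pi(s^{\ast})=\pi(s)^{\ast}$ to conclude via faithfulness. The paper phrases the last step as $\Id=\pi(s)\pi(s)^{\ast}=\pi(ss^{\ast})\Rightarrow ss^{\ast}=1$, while you invert to get $\pi(s^{\ast})=\pi(s^{-1})\Rightarrow s^{\ast}=s^{-1}$, which is the same computation.
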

\begin{proof}
Let $(\pi, \C^n)$ be a faithful  matrix unitary $\ast$-representation of $(S, \ast)$. Then for any $s\in S$, $\pi(s^{\ast})=\pi(s)^{\ast}$, $\id=\pi(s)\pi(s)^{\ast}=\pi(ss^{\ast})$. Since $\pi$ is a faithful representation, $ss^{\ast}=1$ and $s^{\ast}=s^{-1}$.
\end{proof}
\subsection{Ree's matrix semigroup over a group with zero}
Let $I$ and  $J$ be the finite sets $\{1, \cdots, m\}$ and $\{1, \cdots, n\}$. Let $G$ be a finite group, and $G^{0}=G \cup \{0\}$.   Let $P$ be a matrix of type $n\times m$  with each entry $p_{ij}\in G^{0}$. For any $a\in G$,  let $(a)_{ij}$ denote the  $I\times J$-Ree's matrix over $ G^{0}$ with  $a$ in the $(i,j)$-entry and $0$ elsewhere. Let $\mathcal{M}^0(I, J, G, P)$ be a finite set consisting  of all elements   $ (a)_{ij}$ and the zero matrix of type  $m\times n$. One defines a binary operator $\circ$ on $ \mathcal{M}^0(I, J, G, P)$ by $(a)_{ij} \circ  (b)_{kl}=(a p_{jk} b)_{il}$. Then $(\mathcal{M}^0(I, J, G, P), \circ )$ is a semigroup, called a Ree's matrix semigroup over $G^{0}$; $P$ is called a sandwich matrix.  It is well known that every completely $0$-simple finite semigroup is isomorphic to a regular Ree's matrix semigroup.  If $I=J$, and $P$ is a non-singular matrix over $\C[G]$, $\C[\mathcal{M}^0(I, J, G, P)]$ is a semi-simple algebra.
\begin{lemma}\label{ismir}
\begin{itemize}
\item[(1)] Two  regular Ree's matrix semigroups  $ \mathcal{M}^0(I, J, G, P)$ and  $ \mathcal{M}^0(I, J, G', P')$ are isomorphic  iff there exist bijective maps $\alpha: I\longrightarrow I$, $ \beta: J \longrightarrow J$,  a group isomorphism $\varphi: G \longrightarrow G'$, and $\varphi(0)=0$, elements $u_i$, $v_j\in G'$, $i\in I$, $j\in J$    such that for the entries $p'_{\beta(j), \alpha(i)}$  in $P'$  and $p_{ji} $ in  $P$, we have $p'_{\beta(j), \alpha(i)}= v_j^{-1}\varphi(p_{ji})u_i^{-1}$.
\item[(2)] $ \mathcal{M}^0(I, J, G, P)$  is an inverse semigroup  iff $I=J$, $P$ can be chosen to be   an identity matrix up to isomorphism.
\end{itemize}
\end{lemma}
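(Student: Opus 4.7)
The plan is to split into the four assertions (sufficiency and necessity for each of the two parts); the two sufficiency directions are direct constructions, while the necessity directions exploit Green's relations and the structure of idempotents.

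For the sufficiency in (1), given $\alpha, \beta, \varphi$ and the scalars $u_i, v_j$ satisfying the stated relation, I would define $\Phi : \mathcal{M}^0(I,J,G,P) \longrightarrow \mathcal{M}^0(I,J,G',P')$ by $\Phi(0) = 0$ and $\Phi((a)_{ij}) = (u_i\, \varphi(a)\, v_j)_{\alpha(i), \beta(j)}$, then verify via a short calculation that the hypothesis rearranges as $v_j\, p'_{\beta(j), \alpha(k)}\, u_k = \varphi(p_{jk})$, which is precisely the identity needed for $\Phi$ to respect the product $(a)_{ij} \circ (b)_{kl} = (a\, p_{jk}\, b)_{il}$; bijectivity is immediate from bijectivity of $\alpha, \beta, \varphi$. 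For the sufficiency in (2), if $I = J$ and $P$ is the identity matrix, then $(a)_{ij}$ admits $(a^{-1})_{ji}$ as an inverse by direct verification, and uniqueness follows because any inverse of $(a)_{ij}$ must be of the form $(\cdot)_{ji}$ by the multiplication rule.

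For the necessity in (1), let $\Phi$ be an arbitrary isomorphism. Since $0$ is the unique absorbing element, $\Phi(0) = 0$, and $\Phi$ preserves Green's relations $\mathcal{R}, \mathcal{L}, \mathcal{H}$. The nonzero $\mathcal{R}$-classes are parametrized by $I$ (each is the set of nonzero elements with a common first index) and nonzero $\mathcal{L}$-classes by $J$, so $\Phi$ induces bijections $\alpha : I \to I$ and $\beta : J \to J$. Fixing an idempotent $e = (p_{j_0 i_0}^{-1})_{i_0 j_0}$, the maximal subgroup $G_e$ is canonically isomorphic to $G$ via $a \mapsto (a\, p_{j_0 i_0}^{-1})_{i_0 j_0}$; restricting $\Phi$ to $G_e$ and composing with the analogous canonical identification on the target side yields the group isomorphism $\varphi : G \to G'$. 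Finally, for each $i\in I$ and $j\in J$ I would pick distinguished transition elements between $G_e$ and the $(i,j)$-th $\mathcal{H}$-class, read off the scalars $u_i, v_j \in G'$ from their $\Phi$-images, and derive the desired identity $p'_{\beta(j), \alpha(i)} = v_j^{-1}\, \varphi(p_{ji})\, u_i^{-1}$ by comparing $\Phi$ applied to a distinguished representative of the element with sandwich value $p_{ji}$ against the direct computation in the target.

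For the necessity in (2), an inverse semigroup contains exactly one idempotent in each nonzero $\mathcal{R}$-class; since the nonzero idempotents in the $\mathcal{R}$-class indexed by $i$ are precisely $\{(p_{ji}^{-1})_{ij} : p_{ji} \neq 0\}$, each column of $P$ must contain a unique nonzero entry, and symmetrically so must each row. Hence $|I| = |J|$ and $P$ is a monomial matrix; applying part (1) with a suitable permutation $\beta$ and scalars $v_j$ absorbing the nonzero entries of $P$ (taking $\alpha$, $\varphi$, and all $u_i$ trivial) normalizes $P$ to the identity. The main technical obstacle is the necessity in (1), where the five pieces of data must be extracted coherently from arbitrary choices of $\mathcal{R}$- and $\mathcal{L}$-class representatives, and one must verify that the final identity is independent of these choices; this parallels the classical Rees isomorphism theorem, whose bookkeeping is carried out in Clifford--Preston.
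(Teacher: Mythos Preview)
Your outline is correct and essentially reproduces the classical arguments: the Rees isomorphism theorem (your part (1)) and Clifford's characterization of inverse completely $0$-simple semigroups via monomial sandwich matrices (your part (2)).

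The paper, however, does not prove this lemma at all: its ``proof'' consists solely of the citations ``For (1), see \cite[Thm.2,8]{Ho}. For (2), see \cite[Thm.8.1]{Cl42}.'' So there is nothing to compare on the level of argument --- you have supplied what the paper outsources. Your sufficiency construction in (1), namely $\Phi((a)_{ij}) = (u_i\,\varphi(a)\,v_j)_{\alpha(i),\beta(j)}$, is exactly the isomorphism the paper records immediately after the lemma, and your necessity argument in (2), reading off a monomial sandwich matrix from the uniqueness of idempotents in each $\mathcal{R}$- and $\mathcal{L}$-class, is the standard route taken in the cited sources. The only place requiring real care, as you flag, is the coherent extraction of $\alpha,\beta,\varphi,u_i,v_j$ in the necessity direction of (1); the bookkeeping you allude to is precisely what is written out in Howie \cite[\S3.4]{Ho} (equivalently in Clifford--Preston), so your reference there is apt.
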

\begin{proof}
For (1), see \cite[Thm.2,8]{Ho}. For (2), see \cite[Thm.8.1]{Cl42}.
\end{proof}
In the above (1),  the isomorphism from $ \mathcal{M}^0(I, J, G, P)$ to   $ \mathcal{M}^0(I, J, G', P')$ can be given by $(a)_{ij} \longrightarrow (u_i\varphi(a)v_j)_{\alpha(i), \beta(j)} $.

 Let $S= \mathcal{M}^0(I, J, G, P)$ be a regular semigroup.  Let $(\sigma, \C^k)$ be a matrix representation of $G$. We can define two  matrix representations $\pi^l$, $\pi^r$ of $S$ as follows: for $s=(a)_{ij}\in S$, $\pi^l(s)=\sigma((a)_{ij}P)$, and $\pi^r(s)=\sigma(P(a)_{ij})$;   here we treat $P$, $(a)_{ij}$  as matrices over $G^0$, and $\sigma$ acts on a matrix by acting on its entries. By  \cite[Def.1.1]{LaPe}, we call such $\pi^l, \pi^r$  standard representations of $S$.   By \cite[Prop.1.2, Thm.1.4]{LaPe}, (1) $\pi^r$ is a proper representation, (2) if $\sigma$ is irreducible, $\pi^r$ contains only one  non-null irreducible constituent, (3) every irreducible representation of $S$ is equivalent to one such constituent. By duality, the similar result also holds for $\pi^l$. For any $s=(a)_{ij} \in S$,
 \begin{equation}
 P(a)_{ij}= \bordermatrix{
  &  &        &  & j       &   &       &  \cr
 &0 & \cdots & 0& p_{1i}a &0 & \cdots & 0 \cr
 & 0 & \cdots & 0& p_{2i}a &0 & \cdots & 0 \cr
 &\vdots &    & \vdots  & \vdots &\vdots&  & \vdots \cr
 & 0 & \cdots & 0& p_{ni}a &0 & \cdots & 0 \cr
}_{n\times n}
 \end{equation}
 \begin{equation}
 (a)_{ij}P=   \bordermatrix{
&       &        &            &          \cr
&0      & 0      &   \cdots  & 0    \cr
&\vdots & \vdots &           & \vdots   \cr
&0      & 0      &   \cdots  & 0    \cr
i&ap_{j1}& ap_{j2} & \cdots   & ap_{jm}  \cr
&0      & 0      &   \cdots  & 0    \cr
&\vdots & \vdots &           & \vdots   \cr
&0      & 0      &   \cdots  & 0}_{m\times m}
  \end{equation}
  \begin{equation}
\pi^r(s)=\sigma(P(a)_{ij})= \bordermatrix{
 &  &        &  & j       &   &       &  \cr
&0 & \cdots & 0&\sigma( p_{1i}a) &0 & \cdots & 0 \cr
 & 0 & \cdots & 0&\sigma( p_{2i}a) &0 & \cdots & 0 \cr
 &\vdots &    & \vdots  & \vdots &\vdots&  & \vdots \cr
 & 0 & \cdots & 0& \sigma(p_{ni}a) &0 & \cdots & 0
}_{n\times n}
\end{equation}
 \begin{equation}
\pi^l(s)=\sigma( (a)_{ij}P)= \bordermatrix{
&       &        &            &          \cr
&0      & 0      &   \cdots  & 0    \cr
&\vdots & \vdots &           & \vdots   \cr
&0      & 0      &   \cdots  & 0    \cr
i&\sigma(ap_{j1})& \sigma(ap_{j2}) & \cdots   &\sigma( ap_{jm} ) \cr
&0      & 0      &   \cdots  & 0    \cr
&\vdots & \vdots &           & \vdots   \cr
&0      & 0      &   \cdots  & 0}_{m\times m}
\end{equation}

 \begin{equation}
 \pi^r(s)^{\ast}=[\sigma(P(a)_{ij})]^{\ast}= \bordermatrix{
 &                              &                         &               &                        \cr
& 0                             & 0                       &   \cdots      & 0                      \cr
&\vdots                         & \vdots                  &               & \vdots                 \cr
&     0                         & 0                       &   \cdots      & 0                      \cr
j&\sigma(p_{1i}a)^{\ast}         & \sigma(p_{2i}a)^{\ast}  & \cdots       & \sigma(p_{ni}a)^{\ast} \cr
&0                              & 0                       &   \cdots      & 0                     \cr
&\vdots                         & \vdots                  &               & \vdots                 \cr
&0                              & 0                       &   \cdots      & 0}_{n\times n}
 \end{equation}
 \begin{equation}
\pi^l(s)^{\ast}=[\sigma((a)_{ij}P)]^{\ast}=
 \bordermatrix{
  &  &        &  & i       &   &       &  \cr
 &0 & \cdots & 0&\sigma(ap_{j1})^{\ast}&0 & \cdots & 0 \cr
 & 0 & \cdots & 0&\sigma(ap_{j2})^{\ast} &0 & \cdots & 0 \cr
 &\vdots &    & \vdots  & \vdots &\vdots&  & \vdots \cr
 & 0 & \cdots & 0& \sigma(ap_{jm})^{\ast}&0 & \cdots & 0
}_{m\times m}.
\end{equation}

 Following  the proof of the above lemma \ref{ismir} (1) in  \cite[p.66]{Ho}, up to isomorphism,  we  assume $p_{1j}=1$ or $0$ and $p_{11}=1$. Then $\iota: a\longrightarrow (a)_{11}$, defines a group monomorphism  from $G$ to $S$. Moreover,  $(a)_{ij}=(1)_{i1} \circ (a)_{11} \circ (1)_{1j}$.
 \begin{lemma}\label{preunitary}
 Keep the notations.  If both $\pi^l$, $\pi^r$ are  semiunitary representations, then:
  \begin{itemize}
 \item[(1)] $\sigma$ is a unitary representation of $G$,
 \item[(2)] $S$ is an inverse semigroup.
 \end{itemize}
 \end{lemma}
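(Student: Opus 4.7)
The plan is to combine the preunitary identity $AA^{\ast}A = A$ with the very explicit block description of the matrices $\pi^r((a)_{ij})$ and $\pi^l((a)_{ij})$ recorded in (1.3)--(1.4). Since $\pi^r(s)$ for $s = (a)_{ij}$ has all block entries zero outside block-column $j$ (and dually for $\pi^l$), a direct block multiplication gives
\[
\bigl[\pi^r(s)\pi^r(s)^{\ast}\pi^r(s)\bigr]_{r,j} \;=\; \sigma(p_{ri}a)\Bigl(\sum_{k=1}^{n}\sigma(p_{ki}a)^{\ast}\sigma(p_{ki}a)\Bigr),
\]
and a symmetric formula for $\bigl[\pi^l(s)\pi^l(s)^{\ast}\pi^l(s)\bigr]_{i,t}$ involving $\sum_{k}\sigma(ap_{jk})\sigma(ap_{jk})^{\ast}$. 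Because $\sigma$ restricted to $G$ takes invertible values, the preunitary hypothesis forces the bracket to equal $I$ as soon as there is some $r$ with $p_{ri}\neq 0$ to cancel against, giving
\[
\sum_{k}\sigma(p_{ki}a)^{\ast}\sigma(p_{ki}a) = I \qquad\text{and dually}\qquad \sum_{k}\sigma(ap_{jk})\sigma(ap_{jk})^{\ast} = I.
\]

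For part (1) I specialize the first identity to $i=1$, $a=1$. Since $p_{11}=1$, the $k=1$ summand is already $I$, while the remaining summands $\sigma(p_{k1})^{\ast}\sigma(p_{k1})$ are positive semidefinite and must therefore each vanish. As $\sigma$ is invertible on $G$, this forces $p_{k1}=0$ for $k\geq 2$. Re-inserting an arbitrary $a\in G$ collapses the sum to $\sigma(a)^{\ast}\sigma(a)=I$, so $\sigma$ is unitary. The analogous argument using $\pi^l$ gives $p_{1t}=0$ for $t\geq 2$.

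For part (2) I fix an arbitrary column index $i\in I$; regularity of $S$ supplies an $r$ with $p_{ri}\neq 0$, so taking $a=1$ in the cancelled identity yields $\sum_{k}\sigma(p_{ki})^{\ast}\sigma(p_{ki}) = I$. By (1) every nonzero summand already equals $I$, so column $i$ of $P$ has exactly one nonzero entry. The dual argument through $\pi^l$ forces the same for each row of $P$. Hence $|I|=|J|$ and $P$ is a monomial matrix; choosing $\varphi = \mathrm{id}$, $\alpha = \mathrm{id}$, $v_j = 1$, $u_i$ equal to the unique nonzero entry of column $i$, and $\beta$ the permutation reading off the monomial pattern, Lemma \ref{ismir}(1) rewrites $P$ as an identity matrix, and Lemma \ref{ismir}(2) then concludes that $S$ is an inverse semigroup.

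The only step requiring any care is the opening block multiplication; thereafter the conclusion is essentially forced by positive semidefiniteness together with the normal form supplied by Lemma \ref{ismir}.
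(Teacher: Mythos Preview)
Your argument is correct and follows essentially the same route as the paper: compute the block products from (1.3)--(1.6), extract from the preunitary condition the identity $\sum_k \sigma(p_{ki}a)^{\ast}\sigma(p_{ki}a)=I$ (and its $\pi^l$ dual), conclude that every row and every column of $P$ has exactly one nonzero entry, and finish via Lemma~\ref{ismir}. The only minor variation is in part~(1): the paper uses the row normalization $p_{1l}\in\{0,1\}$ and a counting argument to force $t=1$, whereas you obtain the same conclusion by observing that the remaining summands are positive semidefinite and must vanish once one summand already equals $I$.
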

 \begin{proof}
 1) By $\pi^l\circ \iota(a)=\pi^l\circ \iota(a) [\pi^l\circ \iota(a)]^{\ast} \pi^l\circ \iota(a)$,
  \begin{equation}
 \sigma((a)_{11}P)= \begin{bmatrix}
 \sigma(ap_{11})& \sigma(ap_{12}) & \cdots   &\sigma( ap_{1m})  \\
0      & 0      &   \cdots  & 0   \\
\vdots & \vdots &           & \vdots   \\
0      & 0      &   \cdots  & 0
\end{bmatrix}
  \end{equation}
  $$=\begin{bmatrix}
 \sigma(ap_{11})& \sigma(ap_{12}) & \cdots   &\sigma( ap_{1m})  \\
0      & 0      &   \cdots  & 0   \\
\vdots & \vdots &           & \vdots   \\
0      & 0      &   \cdots  & 0
\end{bmatrix}
 \begin{bmatrix}
 \sigma(ap_{11})^{\ast}    &0   & \cdots & 0 \\
  \sigma(ap_{12})^{\ast}   &0   & \cdots & 0\\
        \vdots             &\vdots   &   & \vdots   \\
 \sigma(ap_{1m})^{\ast}    &0   & \cdots & 0
\end{bmatrix}
\begin{bmatrix}
 \sigma(ap_{11})& \sigma(ap_{12}) & \cdots   &\sigma( ap_{1m})  \\
0      & 0      &   \cdots  & 0   \\
\vdots & \vdots &           & \vdots   \\
0      & 0      &   \cdots  & 0
\end{bmatrix}$$
$$=\begin{bmatrix}
 (\sum_{l=1}^m \sigma(ap_{1l})\sigma(ap_{1l})^{\ast})  \sigma(ap_{11})& (\sum_{l=1}^m \sigma(ap_{1l})\sigma(ap_{1l})^{\ast})\sigma(ap_{12}) & \cdots   &(\sum_{l=1}^m \sigma(ap_{1l})\sigma(ap_{1l})^{\ast})\sigma( ap_{1m})  \\
0      & 0      &   \cdots  & 0   \\
\vdots & \vdots &           & \vdots   \\
0      & 0      &   \cdots  & 0
\end{bmatrix}.$$
Since $S$ is a regular semigroup,  there exists $l$ such that $\sigma(ap_{1l})$  is non-singular. Hence $ \sum_{l=1}^m \sigma(ap_{1l})\sigma(ap_{1l})^{\ast} =E$, for any $a\in G$.  Assume $t=\# \{ l\mid p_{1l}=1\}$. Then $\sigma(a) \sigma(a)^{\ast}=\frac{1}{t}E$. In particular, take $a=1_G$, $t=1$.  Hence $\sigma(a)\sigma(a)^{\ast}=E$, for any $a\in G$. \\
 2) For  any $s=(a)_{ij} \in S$, $\pi^l(s)\pi^l(s)^{\ast}\pi^l(s)=\pi^l(s)$ implies $\sum_{l=1}^m \sigma(p_{jl}a) \sigma(p_{jl}a)^{\ast}=E$. So for each fixed $j$, there exists only one nonzero $p_{jl}$, for  $l=1, \cdots, m$.   Similarly, $\pi^r(s)\pi^r(s)^{\ast}\pi^r(s)=\pi^r(s)$ implies $\sum_{k=1}^n \sigma(p_{ki}a)^{\ast} \sigma(p_{ki}a)=E$. So for each fixed $i$, there exists only one nonzero $p_{ki}$, for  $k=1, \cdots, n$.
 Hence  $P$ is a non-singular sandwich matrix.  Up to isomorphism, $P$ can be chosen to be the identity matrix. By Lemma \ref{ismir}(2), $S$ is an inverse semigroup.
 \end{proof}
Let   $\mathcal{A}=\{ (\sigma_1, V_1), \cdots,  (\sigma_k, V_k)\}$  be a set of all pairwise inequivalent irreducible  matrix  representations of $G$.
 \begin{definition}\label{lr}
 If the above $\sigma=\oplus_{i=1}^k \sigma_i$,  we will call the corresponding $(\pi^l, \pi^r)$,  a pair of   matrix Sch\"utzenberger representations of $S$.
 \end{definition}
Note that by \cite[Chap.5.5]{Stein}, if $\C[S]$ is a semi-simple algebra, $\pi^l \simeq \pi^r$.
\begin{lemma}\label{astrepr}
   \begin{itemize}
   \item[(1)]  If both  $\pi^l$, $\pi^r$ are semiunitary  representations, then $S$ is an inverse semigroup.
   \item[(2)]  If both  $\pi^l$, $\pi^r$ are semiunitary $\ast$-representations, then $S$ is an inverse semigroup with the inverse map given by $\ast$.
   \end{itemize}
   \end{lemma}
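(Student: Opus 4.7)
The plan is to handle the two parts in sequence, with (1) being an immediate corollary of Lemma \ref{preunitary} and (2) requiring an extra faithfulness argument to extract the involution.

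For part (1), I would simply invoke Lemma \ref{preunitary} applied to the specific representation $\sigma = \bigoplus_{i=1}^k \sigma_i$: since the current hypothesis (both $\pi^l$ and $\pi^r$ are semiunitary) matches what that lemma requires, its conclusion yields that $S$ is an inverse semigroup. The argument inside the proof of Lemma \ref{preunitary} also shows along the way that $\sigma$ is unitary and that $P$ is a monomial matrix — each row and each column has exactly one nonzero entry in $G$, forcing $|I| = |J|$. These properties will be used in part (2).

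For part (2), after applying part (1) we must show that the involution $\ast$ coincides with the inverse map on the inverse semigroup $S$. The plan is to combine the $\ast$-representation condition $\pi^l(s^\ast) = \pi^l(s)^\ast$ with the semiunitarity of $\pi^l$ to get
\[
\pi^l(s s^\ast s) \;=\; \pi^l(s)\pi^l(s)^\ast\pi^l(s) \;=\; \pi^l(s),
\]
and symmetrically $\pi^l(s^\ast s s^\ast) = \pi^l(s^\ast)$. Provided $\pi^l$ is faithful, these translate into $s s^\ast s = s$ and $s^\ast s s^\ast = s^\ast$, which identifies $s^\ast$ as an inverse of $s$; since $S$ is inverse, the inverse is unique, so $s^\ast$ must equal the inverse of $s$.

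The main obstacle is therefore the faithfulness of $\pi^l$, and it is here that the specific choice of $\sigma$ in Definition \ref{lr} is essential. Since $P$ is monomial, let $\tau$ denote the permutation of $\{1,\ldots,m\}$ for which $p_{j\tau(j)}$ is the unique nonzero entry of row $j$. Then the block matrix $\pi^l((a)_{ij})$ has a single nonzero block, located at position $(i,\tau(j))$ with value $\sigma(a\, p_{j\tau(j)})$. Because $\sigma$ is the direct sum of all pairwise inequivalent irreducible representations of $G$ it is faithful on $G$, so the triple $(i,j,a)$ — hence the element $(a)_{ij}$ itself — can be recovered from $\pi^l((a)_{ij})$. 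Faithfulness of $\pi^l$ follows, closing the argument.
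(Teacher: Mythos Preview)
Your proposal is correct and follows essentially the same approach as the paper: part (1) is deduced directly from Lemma \ref{preunitary}, and part (2) combines the $\ast$-representation identity with semiunitarity to obtain $\pi^l(ss^\ast s)=\pi^l(s)$, then uses faithfulness of $\pi^l$ to conclude $ss^\ast s=s$ and $s^\ast ss^\ast=s^\ast$. The paper simply asserts ``$\pi^l$ is a faithful representation'' without justification, whereas you supply the argument (monomial $P$ from part (1), recovery of $(i,j,a)$ via injectivity of $\sigma$ on $G$); this extra care is warranted, since for a general regular Rees matrix semigroup $\pi^l$ need not be faithful---it is precisely the monomial structure of $P$, forced by part (1), that makes it so here.
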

\begin{proof}
1) It  is an example of Lemma \ref{preunitary}. \\
2)  For any $s\in S$, $\pi^l(s^{\ast})=\pi^l(s)^{\ast}$, $\pi^l(s)=\pi^l(s)\pi^l(s)^{\ast}\pi^l(s)=\pi^l(ss^{\ast}s)$.  Note that $\pi^l$ is a faithful representation. So $ss^{\ast}s=s$, $s^{\ast}ss^{\ast}=s^{\ast}$. Hence $s^{\ast}$ is the inverse of $s$.
\end{proof}
Following the proof of the above lemma \ref{ismir} (1) in  \cite[p.66]{Ho}, we can  derive the next result:
\begin{lemma}\label{SS}
Let $S=\mathcal{M}^0(I, J, G, P)$ be a regular semigroup. A bijective map  $\ast$ on $S$ defines  an involution
 iff $I=J$, and  there exist
 \begin{itemize}
 \item[(1)]  an involution   $\ast$  on $G$ as well as  $G^0$,
 \item[(2)]  an involution $\varphi$ on  the set $I$,
 \item[(3)] elements $u_i\in G$, $i\in I$,
 \item[(4)] a central element $z\in Z(G)$,  $z^{\ast}=z^{-1}$,
  \end{itemize}
  such that
  \begin{itemize}
   \item[(a)]  $(a)_{ij}^{\ast}=(zu_{\varphi(j)}^{\ast}a^{\ast}u_i^{-1})_{\varphi(j), \varphi(i)}$, for any $a\in G$,
   \item[(b)]  $p_{ji}^{\ast}= zu_i^{-1} p_{\varphi(i), \varphi(j)}u_{\varphi(j)}^{\ast}$.
   \end{itemize}
\end{lemma}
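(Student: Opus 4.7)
The plan is to prove both directions by exploiting the Ree matrix structure: the sufficiency is a direct verification, while the necessity extracts the data $(\ast|_G,\varphi,\{u_i\},z)$ from the involution via a Green's relation analysis together with a normalization argument modelled on the isomorphism classification of \cite[p.66]{Ho} (Lemma \ref{ismir}(1)).

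For the direction ($\Leftarrow$), given the data satisfying (a) and (b), I would check directly that $\ast$ on $S$ defined by (a) satisfies $s^{\ast\ast}=s$ and $(st)^{\ast}=t^{\ast}s^{\ast}$. Applying (a) twice to $(a)_{ij}$ and using $\varphi^2=\id$, $(u^{-1})^{\ast}=(u^{\ast})^{-1}$, the centrality of $z$, and $z^{\ast}=z^{-1}$, the composed scalar collapses via $z\,a\,u_{\varphi(j)}z^{\ast}u_{\varphi(j)}^{-1}=zz^{-1}a=a$. For the anti-homomorphism axiom, expanding $((a)_{ij}\circ(b)_{kl})^{\ast}$ using (a) together with the product rule $(a)_{ij}\circ(b)_{kl}=(ap_{jk}b)_{il}$, and expanding $(b)_{kl}^{\ast}\circ(a)_{ij}^{\ast}$ similarly, gives matching positions $(\varphi(l),\varphi(i))$ on both sides. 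Equality of scalar entries reduces, after cancelling the common factors $u_{\varphi(l)}^{\ast}$ on the left and $u_i^{-1}$ on the right, and using the centrality of $z$, precisely to condition (b) applied to $p_{jk}^{\ast}$.

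For the direction ($\Rightarrow$), the involution $\ast$ is an order-two anti-automorphism of $S$, so it preserves the $\mathcal{J}$-class structure (the nonzero elements form the unique nontrivial $\mathcal{J}$-class) and it interchanges $\mathcal{L}$- with $\mathcal{R}$-classes. Since the nonzero $\mathcal{L}$-classes of $\mathcal{M}^0(I,J,G,P)$ are parametrised by $J$ and the $\mathcal{R}$-classes by $I$, this forces $|I|=|J|$; after identifying $I=J$, the induced map on indexing sets is an involution $\varphi$ on $I$. Hence $(a)_{ij}^{\ast}=(f_{ij}(a))_{\varphi(j),\varphi(i)}$ for some bijections $f_{ij}\colon G\to G$. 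Normalizing the sandwich matrix so that $p_{11}=1$ and $p_{1j},p_{i1}\in\{0,1\}$ as in the proof of Lemma \ref{ismir}(1), the embedding $\iota\colon G\hookrightarrow S$, $a\mapsto(a)_{11}$, transports $\ast$ to an anti-automorphism of $G$; composing with a suitable inner automorphism isolates the involution $\ast$ on $G$. The elements $u_i\in G$ enter as the ``relative'' scalars that reduce $f_{ij}$ at a general $(i,j)$ back to the normalized case at $(1,1)$, and the central element $z$ arises as the obstruction needed to make $\ast\ast=\id$ hold: setting $a=1_G$ in $((a)_{ij})^{\ast\ast}=(a)_{ij}$ yields $zu_{\varphi(j)}z^{\ast}u_{\varphi(j)}^{-1}=1$ for every $j$, which (varying $j$) forces $z\in Z(G)$ and $z^{\ast}=z^{-1}$. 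Finally, condition (b) is extracted by applying the anti-homomorphism axiom to $((1)_{ij}\circ(1)_{kl})^{\ast}$ and comparing scalar entries.

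The main obstacle will be the bookkeeping in the normalization step: cleanly separating the involution on $G$ from the translations $u_i$ and the central twist $z$, and ensuring that the data so obtained are consistent across all index pairs $(i,j)$. Once the structure is fixed at the reference pair $(1,1)$, propagating it to general $(i,j)$ via the anti-homomorphism condition is mechanical and delivers (a) and (b) together.
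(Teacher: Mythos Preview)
Your plan is correct and follows essentially the same route as the paper's proof: Green's relations force $I=J$ and produce the involution $\varphi$ on the index set; a reference $\mathcal{H}$-class supplies the involution on $G$; and the two axioms $\ast\ast=\id$ and anti-multiplicativity then deliver the constants $u_i$, the central $z$, and condition~(b). Two small remarks where the paper differs slightly: it does \emph{not} normalise $P$ (the statement is for the given $P$, and (b) refers to its entries) but simply fixes any pair $(i_0,j_0)$ with $p_{j_0,i_0}\in G$, first obtaining $(a)_{ij}^{\ast}=(v_j a^{\ast} u_i^{-1})_{\varphi(j),\varphi(i)}$ with two auxiliary families $u_i,v_j$ and only then collapsing $v_{\varphi(i)}=zu_i^{\ast}$ from $\ast\ast=\id$; and the anti-automorphism you transport to $G$ is already of order two (apply $\ast$ twice at the reference position), so no inner-automorphism correction is required.
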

\begin{proof}
$(\Rightarrow)$ If $S=S^1$, then $1^{\ast} a^{\ast}=(a1)^{\ast}=a^{\ast}= (1a)^{\ast}=a^{\ast} 1^{\ast}$, for any $a\in S$. So $1^{\ast}=1$. If $S\neq S^1$, we can extend the involution to define over $S^{1}$ by adding $1^{\ast}=1$.
 Let $S_{ji}=\{ (a)_{ji} \mid a\in G\}$,  $L_{i}=\{ (b)_{pi}\mid p\in I, b\in G\}$, $R_{j}=\{ (b)_{jq}\mid q\in J, b\in G\}$.  Then for $a=(a)_{ji}\in S$, $L_a=L_i$, $R_a=R_j$.   $a\mathcal{L} b$ iff $S^1a=S^1 b$ iff $a^{\ast} S^1=b^{\ast}S^1$ iff  $a^{\ast}\mathcal{R} b^{\ast}$. So $\ast$ maps $L_{a}$ onto $R_{a^{\ast}}$, and $R_{a}$ onto $L_{a^{\ast}}$. Assume $a^{\ast}=(b)_{kl}\in S$. Then $ L_{a^{\ast}}=L_l$, $R_{a^{\ast}}=R_k$, $\ast$ maps $L_{i}$ onto $R_{k}$. Therefore $I=J$. Note that $k$ is determined by $i$, independent of $j$. Let us   write $k=\varphi(i)$. Then $\varphi$ is a bijective map on $I$, and $\varphi^2=id$. Then $\ast$ sends $L_i$ onto $R_{\varphi(i)}$, $R_j$ onto $L_{\varphi(j)}$. Since $L_i\cap R_j= S_{ji}$, $\ast$ sends $S_{ji}$ onto $S_{\varphi(i), \varphi(j)}$.

 For $i_0\in I$,  assume $p_{j_0,i_0}\in G$.  Then  $e_{i_0}= (p_{j_0,i_0}^{-1})_{ i_0,j_0}\in E(S)$, $e_{i_0}^{\ast}\in E(S) \cap S_{\varphi(j_0), \varphi(i_0)}$, so $e_{i_0}^{\ast}=(p_{\varphi(i_0), \varphi(j_0)}^{-1})_{\varphi(j_0),\varphi(i_0)}$. Let us write $S_{i_0,j_0}=\{ (ap_{j_0,i_0}^{-1})_{i_0,j_0}\mid a\in G\}$,
$S_{\varphi(j_0),\varphi(i_0)}=\{ (ap_{\varphi(i_0),\varphi(j_0)}^{-1})_{\varphi(j_0),\varphi(i_0)}\mid a\in G\}$, and
$(ap_{j_0,i_0}^{-1})_{i_0,j_0}^{\ast}=(a^{\ast} p_{\varphi(i_0),\varphi(j_0)}^{-1})_{\varphi(j_0),\varphi(i_0)}$.
Applying  the involution $\ast$ on the equality $(a_1p_{j_0,i_0}^{-1})_{i_0,j_0} \circ (a_2p_{j_0,i_0}^{-1})_{i_0,j_0} =(a_1a_2p_{j_0,i_0}^{-1})_{i_0,j_0}$, we obtain:
\[((a_1a_2)^{\ast}p_{\varphi(i_0),\varphi(j_0)}^{-1})_{\varphi(j_0),\varphi(i_0)}=(a_2^{\ast}p_{\varphi(i_0),\varphi(j_0)}^{-1})_{\varphi(j_0),\varphi(i_0)} \circ  (a_1^{\ast}p_{\varphi(i_0),\varphi(j_0)}^{-1})_{\varphi(j_0),\varphi(i_0)}\]
\[=(a_2^{\ast}a_1^{\ast} p_{\varphi(i_0),\varphi(j_0)}^{-1})_{\varphi(j_0),\varphi(i_0)}. \]
  Hence $(a_1a_2)^{\ast}=a_2^{\ast}a_1^{\ast}$.   For any other $(a)_{ij} \in S_{ij}$, $(a)_{ij}= (p_{j_0,i_0}^{-1})_{i, j_0}\circ  ( ap_{j_0,i_0}^{-1})_{i_0,j_0}\circ (1 )_{i_0,j}$. Let us write $ (p_{j_0,i_0}^{-1})_{i, j_0}^{\ast}=(u_i^{-1})_{\varphi(j_0),\varphi(i)}$, $(1 )_{i_0,j}^{\ast}=(v_jp_{\varphi(i_0), \varphi(j_0)}^{-1})_{\varphi(j),\varphi(i_0)}$, for some $u_i, v_j\in G$.
  Hence
  $$(a)_{ij}^{\ast}= (1 )_{i_0,j}^{\ast} \circ ( ap_{j_0,i_0}^{-1})_{i_0,j_0}^{\ast} \circ(p_{j_0,i_0}^{-1})_{i, j_0}^{\ast}$$
  $$=(v_jp_{\varphi(i_0), \varphi(j_0)}^{-1})_{\varphi(j),\varphi(i_0)} \circ (a^{\ast} p_{\varphi(i_0),\varphi(j_0)}^{-1})_{\varphi(j_0),\varphi(i_0)}\circ (u_i^{-1})_{\varphi(j_0),\varphi(i)}$$
  $$=(v_jp_{\varphi(i_0), \varphi(j_0)}^{-1} p_{\varphi(i_0), \varphi(j_0)} a^{\ast} p_{\varphi(i_0),\varphi(j_0)}^{-1}p_{\varphi(i_0), \varphi(j_0)}u_i^{-1})_{\varphi(j),\varphi(i)}=(v_j a^{\ast} u_i^{-1})_{\varphi(j),\varphi(i)}.$$

  Note that $\ast$ is an involution on $S$.
  \begin{itemize}
  \item[(i)] $(a)_{ij}=(a)_{ij}^{\ast\ast}=[  (v_j a^{\ast} u^{-1}_i)_{\varphi(j),\varphi(i)}]^{\ast}= (v_{\varphi(i)}[ (u_i^{-1})^{\ast}av_j^{\ast} ]u^{-1}_{\varphi(j)})_{ij}$. Hence $v_{\varphi(i)}(u^{-1}_i)^{\ast}av_j^{\ast} u^{-1}_{\varphi(j)}=a$, for all $i, j\in I$. Fix $i$,  choose different $j$, and  vice-versa.  Then   $zu_{i}^{\ast}=v_{\varphi(i)}$, all $i$,  for some $z\in Z(G)$,  $z^{\ast}=z^{-1}$.
  \item[(ii)] $(a)_{ij} \circ (b)_{kl}=(ap_{jk}b)_{il}$, so $(v_l b^{\ast} p^{\ast}_{jk}a^{\ast} u^{-1}_i)_{\varphi(l), \varphi(i)}=(ap_{jk}b)^{\ast}_{il}= (b)_{kl}^{\ast}\circ (a)_{ij}^{\ast}=(v_lb^{\ast}u_k^{-1})_{\varphi(l), \varphi(k)}\circ(v_ja^{\ast}u^{-1}_i)_{\varphi(j), \varphi(i)}=( v_lb^{\ast}u^{-1}_kp_{\varphi(k), \varphi(j)}v_ja^{\ast}u^{-1}_i)_{\varphi(l), \varphi(i)}$. Hence $b^{\ast} p^{\ast}_{jk}a^{\ast}=b^{\ast}u^{-1}_kp_{\varphi(k), \varphi(j)}v_ja^{\ast}$, which implies that $p^{\ast}_{jk}=u^{-1}_kp_{\varphi(k), \varphi(j)}v_j=zu^{-1}_kp_{\varphi(k), \varphi(j)}(u_{\varphi(j)})^{\ast}$.
   \end{itemize}
$(\Leftarrow)$ One can recover the equalities in the above (i), (ii) from the given conditions. So $\ast$ defines an involution on $S$.
\end{proof}

Let us change $S$ by certain isomorphism as described in Lemma \ref{ismir}. Let $p'_{ji}= p_{\beta(j),\alpha(i)}u_{\alpha(i)}^{\ast}$, $(a)_{ij}'=(u_{\alpha(i)}^{\ast} a)_{ \alpha(i), \beta(j)}$, for some bijective maps $\alpha, \beta: I \longrightarrow I$ such that $\varphi=\beta\circ \alpha^{-1}=\varphi^{-1}$, for  $u_i$ in above lemma \ref{SS}. Then
$$(p'_{ji})^{\ast}=u_{\alpha(i)}p^{\ast}_{ \beta(j), \alpha(i)}=(u_{\alpha(i)})zu_{\alpha(i)}^{-1} p_{\varphi\circ\alpha(i), \varphi\circ\beta(j)}u_{ \varphi\circ\beta(j)}^{\ast}$$
$$=zp_{\varphi\circ\alpha(i), \varphi\circ\beta(j)}u_{ \varphi\circ\beta(j)}^{\ast}=zp'_{\beta^{-1}\circ \varphi\circ\alpha(i), \alpha^{-1}\circ  \varphi\circ\beta(j)}=zp'_{ij}, $$
$$[(a)'_{ij}]^{\ast}=[(u_{\alpha(i)}^{\ast} a)_{\alpha(i), \beta(j)}]^{\ast}=(zu_{\varphi\circ \beta(j)}^{\ast}a^{\ast}u_{\alpha(i)}u_{\alpha(i)}^{-1})_{\varphi\circ\beta(j), \varphi\circ\alpha(i)}$$
$$=(zu_{\alpha(j)}^{\ast}a^{\ast})_{\alpha(j), \beta(i)}=z(a^{\ast})'_{ji}.$$

\begin{corollary}\label{regular}
 Up to isomorphism, a regular  Ree's matrix semigroup  with an involution has the form:   $(S=\mathcal{M}^0(I, I, G, P), \ast)$, $(a)^{\ast}_{ij}=(za^{\ast})_{ji}$, $p_{ji}^{\ast}= zp_{ij}$, where $\ast$ is an involution on $G$ as well as $G^0$, $z\in Z(G)$ and $z^{\ast}=\frac{1}{z}$.
 \end{corollary}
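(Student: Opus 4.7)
The plan is to follow the normalization already outlined in the paragraph immediately preceding the statement. By Lemma \ref{SS}, any involution on a regular Ree's matrix semigroup $\mathcal{M}^0(I,J,G,P)$ forces $I=J$ and is parametrized by an involution $\varphi$ on $I$ with $\varphi^{2}=\id$, elements $u_i \in G$, an involution $\ast$ on $G$ (extended to $G^0$ by $0^\ast=0$), and a central element $z \in Z(G)$ with $z^\ast = z^{-1}$, via formulas (a) and (b) of that lemma. My goal in the corollary is to change coordinates, via an isomorphism of Ree's matrix semigroups as classified in Lemma \ref{ismir}, so that $\varphi$ becomes the identity and all the $u_i$ disappear.

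To achieve this, I would apply the isomorphism from Lemma \ref{ismir} determined by a pair of bijections $\alpha,\beta : I \to I$ satisfying $\beta\circ\alpha^{-1} = \varphi$ (which is possible and is equivalent to $\alpha^{-1}\circ\beta = \varphi^{-1}$ because $\varphi$ is an involution), combined with the twisting elements $u_{\alpha(i)}^\ast \in G$ on the $I$-side and $1$ on the $J$-side. This produces the new sandwich matrix entries $p'_{ji} = p_{\beta(j),\alpha(i)}\, u_{\alpha(i)}^\ast$ and the relabeled semigroup elements $(a)'_{ij} = (u_{\alpha(i)}^\ast\, a)_{\alpha(i),\beta(j)}$, as in the display right before the statement.

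I would then compute $(p'_{ji})^\ast$ and $\bigl[(a)'_{ij}\bigr]^\ast$ by direct substitution: expand using the formulas (a) and (b) of Lemma \ref{SS}, use that $\ast$ is an involution on $G$ so $(u_{\alpha(i)}^\ast)^\ast = u_{\alpha(i)}$, use the centrality of $z$, and use $\varphi^{2}=\id$ together with the defining relation $\beta\circ\alpha^{-1} = \varphi$ to resolve the index permutations. The $u$-factors cancel pairwise and the index shifts collapse, leaving exactly $(p'_{ji})^\ast = z\,p'_{ij}$ and $\bigl[(a)'_{ij}\bigr]^\ast = z\,(a^\ast)'_{ji}$, which is the asserted normal form.

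The only real obstacle is the index bookkeeping in the last step, namely verifying that one single choice of $\alpha,\beta$ and of twisting elements $u_{\alpha(i)}^\ast$ works simultaneously for both (a) and (b); this is essentially the two-line computation displayed just above the corollary statement, and once it is in place the corollary follows. The conditions on $G$ itself (that $\ast$ is an involution on $G$ and $G^0$, that $z \in Z(G)$, and $z^\ast = z^{-1}$) are inherited directly from Lemma \ref{SS} with no further work.
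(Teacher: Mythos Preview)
Your proposal is correct and follows exactly the paper's own argument: the paragraph immediately preceding the corollary \emph{is} the paper's proof, and you have accurately described that computation---choosing bijections $\alpha,\beta$ with $\beta\circ\alpha^{-1}=\varphi$, twisting by $u_{\alpha(i)}^{\ast}$, and then verifying $(p'_{ji})^{\ast}=zp'_{ij}$ and $[(a)'_{ij}]^{\ast}=z(a^{\ast})'_{ji}$ by direct substitution using Lemma~\ref{SS}(a),(b). There is nothing to add.
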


\begin{lemma}\label{astrepr1}
Keep the above notations. Assume that $\C[S]$ is semi-simple.  For the pair of representations $\pi^l, \pi^r$ associated to an irreducible representation $\sigma$. The following  are equivalent:
\begin{itemize}
\item[(1)]  $\sigma$ is  isomorphic to a  $\ast$ representation of $G$,
\item[(2)] $\pi$ is isomorphic to  a  $\ast$ representation of $S$, for $\pi=\pi^r\simeq \pi^l$,
\item[(3)] $\check{\pi} \simeq D(\pi)\circ \ast$, $\pi=\pi^r \simeq \pi^l$.
\end{itemize}
\end{lemma}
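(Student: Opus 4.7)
The plan is to follow the strategy of Lemma~\ref{finast}, using characters under the semi-simplicity of $\C[S]$ and the correspondence $\sigma\longleftrightarrow\pi$ given by the standard representations to transfer each condition between $G$ and $S$. I would prove the cycle $(2)\Rightarrow(3)\Rightarrow(1)\Rightarrow(2)$.

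For $(2)\Rightarrow(3)$ I would argue exactly as in the proof of Lemma~\ref{finast}$(1)\Rightarrow(2)$: if $\pi$ is isomorphic to a matrix $\ast$-representation $\pi'$, then $\pi'(s^{\ast})=\pi'(s)^{\ast}$ gives
\[
\chi_{D(\pi)\circ\ast}(s)=\chi_{\pi'}(s^{\ast})=\overline{\chi_{\pi'}(s)}=\overline{\chi_\pi(s)}=\chi_{\check\pi}(s),
\]
where the last equality uses that $\check\pi\simeq\Ind_{G}\check\sigma$ has character the complex conjugate of $\chi_\pi$ (a consequence of the explicit induction character formula together with $\chi_{\check\sigma}=\overline{\chi_\sigma}$ on the finite group $G$). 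Linear independence of irreducible characters under semi-simplicity then yields $\check\pi\simeq D(\pi)\circ\ast$.

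For $(3)\Rightarrow(1)$ I would push the identity down to $G$: expressing both sides via the induction formula in terms of $\chi_\sigma$ and $\chi_{\check\sigma}$, one sees that $\check\pi\simeq D(\pi)\circ\ast$ reduces to an identity $\check\sigma\simeq D(\sigma)\circ\ast'$, where $\ast'$ is the involution transported to $G$ by $\iota\colon a\mapsto(a)_{11}$. The central element $z\in Z(G)$ appearing in Corollary~\ref{regular} acts under $\sigma$ by a scalar (Schur) of modulus $1$ (since $z^{\ast}=z^{-1}$), and may therefore be absorbed by rescaling. Lemma~\ref{finast} applied to $(G,\ast')$ then gives that $\sigma$ is isomorphic to a $\ast$-representation of $G$. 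Conversely, for $(1)\Rightarrow(2)$, starting from a matrix $\ast$-representation $\sigma'$ equivalent to $\sigma$, I would build $\pi^l$ from $\sigma'$ via the explicit block formula displayed in the excerpt; the relations $(a)_{ij}^{\ast}=(za^{\ast})_{ji}$ and $p_{ji}^{\ast}=zp_{ij}$ of Corollary~\ref{regular} make the block-matrix expression for $\pi^l(s^{\ast})$ match that for $\pi^l(s)^{\ast}$ up to the scalar $\sigma'(z)=cI$, and a diagonal basis change absorbing $\sqrt{c}$ yields $\pi^l(s^{\ast})=\pi^l(s)^{\ast}$ for all $s\in S$. Passing to the unique non-null irreducible constituent produces $\pi$ as the required $\ast$-representation.

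The main obstacle is the bookkeeping in $(3)\Rightarrow(1)$: tracking how the sandwich matrix $P$ and the central twist $z$ interact with the induction and duality functors, so that the apparently $S$-level identity truly descends to the clean $G$-level identity to which Lemma~\ref{finast} can be applied.
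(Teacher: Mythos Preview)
Your cycle $(2)\Rightarrow(3)\Rightarrow(1)\Rightarrow(2)$ is a reasonable route, and the character computations you sketch for $(2)\Rightarrow(3)$ and $(3)\Rightarrow(1)$ are close in spirit to the paper's direct trace calculations (the paper proves $(1)\Leftrightarrow(2)$ and $(1)\Leftrightarrow(3)$ by evaluating traces at elements $s=(p_{ji}^{-1}g)_{ij}$ with $p_{ji}\in G$). There is, however, a genuine gap in your $(1)\Rightarrow(2)$.

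You assert that, once $\pi^l$ is built from a matrix $\ast$-representation $\sigma'$, the block formulas make $\pi^l(s^{\ast})$ agree with $\pi^l(s)^{\ast}$ ``up to the scalar $\sigma'(z)=cI$'', removable by a diagonal basis change. That is not what the computation gives. From $p_{ji}^{\ast}=zp_{ij}$ one gets $\sigma'(P)^{\ast}=\sigma'(z)\,\sigma'(P)$, and then (for $\pi=\pi^r$ or equally $\pi^l$)
\[
\pi(s)^{\ast}\;=\;\sigma'(P)^{-1}\,\pi(s^{\ast})\,\sigma'(P),
\]
so the discrepancy is conjugation by the \emph{full} block matrix $\sigma'(P)$, not a scalar. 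The heart of the paper's argument is precisely how to undo this: one sets $Q=\sigma'(P)\,\omega$ with $\omega^{2}=\sigma'(z)$, $|\omega|=1$; the relation $p_{ji}^{\ast}=zp_{ij}$ then forces $Q^{\ast}=Q$, one writes $Q=AA^{\ast}$ with $A\in\GL_{kn}(\C)$, and $\pi':=A^{-1}\pi A$ satisfies $\pi'(s^{\ast})=\pi'(s)^{\ast}$. A diagonal rescaling by $\sqrt{c}$ cannot accomplish this.

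There is also a smaller wrinkle in your $(3)\Rightarrow(1)$. The map on $G$ transported via $\iota\colon a\mapsto (a)_{11}$ is $a\mapsto z a^{\ast}$, and for $z\neq 1$ this is \emph{not} anti-multiplicative (one finds $(ab)\mapsto zb^{\ast}a^{\ast}$ whereas $(zb^{\ast})(za^{\ast})=z^{2}b^{\ast}a^{\ast}$), so it is not an involution and Lemma~\ref{finast} does not apply to $(G,\ast')$. The paper sidesteps this by working with the genuine involution $\ast$ on $G$ coming from Corollary~\ref{regular}: evaluating $\tr\check\pi$ and $\tr(D(\pi)\circ\ast)$ at $s=(p_{ji}^{-1}g)_{ij}$, the factors of $z$ cancel via $z^{\ast}=z^{-1}$ and conjugation-invariance of trace, yielding $\tr\check\sigma(g)=\tr\sigma(g^{\ast})$ directly and hence $\check\sigma\simeq D(\sigma)\circ\ast$.
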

\begin{proof}
$(1) \Rightarrow (2)$ Assume that $\sigma$ is an irreducible matrix $\ast$-representation of dimension $k$. Then $\sigma(g^{\ast})=\sigma(g)^{\ast}$, for any $g\in G$. In particular, for above $z\in Z(G)$, $\sigma(z)\sigma(z)^{\ast}=\sigma(z)\sigma(z^{\ast})=\sigma(z)\sigma(\frac{1}{z})=1$. As $\sigma$ is irreducible, $\sigma(z)$ is a unit in $\C^{\times}$.
 For any $s=(a)_{ij} \in S$, $(a)_{ij}^{\ast}=z(a^{\ast})_{ji}$,
 $$P(a)_{ij}= \bordermatrix{
  &  &        &  & j       &   &       &  \cr
 &0 & \cdots & 0& p_{1i}a &0 & \cdots & 0 \cr
 & 0 & \cdots & 0& p_{2i}a &0 & \cdots & 0 \cr
 &\vdots &    & \vdots  & \vdots &\vdots&  & \vdots \cr
 & 0 & \cdots & 0& p_{ni}a &0 & \cdots & 0 \cr
}$$
 $$ P(a^{\ast})_{ji}= \bordermatrix{
 &  &  &        & i   &      &   &       &  \cr
 &0 & \cdots & 0& p_{1j}a^{\ast} &0 & \cdots & 0 \cr
 &0 & \cdots & 0& p_{2j}a^{\ast} &0 & \cdots & 0 \cr
 &\vdots &    & \vdots  & \vdots &\vdots&  & \vdots \cr
 &0 & \cdots & 0& p_{nj}a^{\ast} &0 & \cdots & 0 \cr}.$$

$$(\pi(s))^{\ast}=[\sigma(P(a)_{ij})]^{\ast}=\begin{pmatrix}
0 & \cdots & 0& \sigma(p_{1i}a) &0 & \cdots & 0 \\
0 & \cdots & 0& \sigma(p_{2i}a) &0 & \cdots & 0 \\
\vdots &    & \vdots  & \vdots &\vdots&  & \vdots \\
0 & \cdots & 0& \sigma(p_{ni}a) &0 & \cdots & 0
\end{pmatrix}^{\ast} $$
$$=\begin{pmatrix}
0 & 0 &   \cdots  & 0  \\
\vdots & \vdots &  & \vdots \\
0 & 0 &   \cdots  & 0  \\
\sigma(p_{1i}a)^{\ast}& \sigma(p_{2i}a)^{\ast} & \cdots & \sigma(p_{ni}a)^{\ast}\\
0 & 0 &   \cdots  & 0  \\
\vdots & \vdots &  & \vdots \\
0 & 0 &   \cdots  & 0
\end{pmatrix}$$
$$= \begin{pmatrix}
  0 & 0 &   \cdots  & 0  \\
  \vdots & \vdots &  & \vdots \\
  0 & 0 &   \cdots  & 0  \\
 \sigma(a^{\ast}p^{\ast}_{1i})& \sigma(a^{\ast}p^{\ast}_{2i}) & \cdots & \sigma(a^{\ast}p^{\ast}_{ni})\\
  0 & 0 &   \cdots  & 0  \\
 \vdots & \vdots &  & \vdots \\
  0 & 0 &   \cdots  & 0
  \end{pmatrix}$$
  $$= \bordermatrix{
  &  &        &  &      &   &       &  \cr
  & 0 & 0 &   \cdots  & 0   \cr
 & \vdots & \vdots &  & \vdots  \cr
 & 0 & 0 &   \cdots  & 0   \cr
j& \sigma(za^{\ast}p_{i1})& \sigma(za^{\ast}p_{i2}) & \cdots & \sigma(za^{\ast}p_{in}) \cr
 & 0 & 0 &   \cdots  & 0   \cr
 &\vdots & \vdots &  & \vdots  \cr
 & 0 & 0 &   \cdots  & 0
  \cr}$$
  $$= \sigma(z(a^{\ast})_{ji} P)=\sigma(z)\sigma((a^{\ast})_{ji}) \sigma(P).$$

 $$\pi(s^{\ast})=\sigma\big(zP(a^{\ast})_{ji}\big)$$
 $$= \bordermatrix{
  &       &        &         &       i                &       &        &    \cr
  &0      & \cdots & 0       & \sigma(zp_{1j}a^{\ast}) &0      & \cdots & 0  \cr
  &0      & \cdots & 0       &\sigma( zp_{2j}a^{\ast}) &0      & \cdots & 0   \cr
  &\vdots &        & \vdots  & \vdots                 &\vdots &        & \vdots \cr
  &0      & \cdots & 0       & \sigma(zp_{nj}a^{\ast}) &0      & \cdots & 0      \cr}$$
$$=\sigma(z)\sigma(P)\sigma((a^{\ast})_{ji}).$$ Hence $(\pi(s))^{\ast}=\sigma(P)^{-1}\pi(s^{\ast})\sigma(P)$, for any $s\in S$. Note that $\sigma(z)^{\ast}=\sigma(z)^{-1}$, a unit in  $ \C^{\times}$. Let $\omega\in \C^{\times}$, such that $\omega^2= \sigma(z)$,  $\omega\omega^{\ast}=1$. Let us define a matrix $Q=\sigma(P)\omega$.  Then:
$$[\sigma(p_{ij})\omega]^{\ast}=\omega^{\ast} \sigma(p_{ij})^{\ast}=\omega^{\ast} \sigma(p_{ij}^{\ast})=\omega^{\ast} \sigma(p_{ji})\sigma(z)= \sigma(p_{ji})\omega.$$
 Therefore, $Q$ is a non-singular Hermitian  matrix, and $Q=AA^{\ast}$, for some $A\in \GL_{kn}(\C)$. Let us define $\pi'(s)=A^{-1}\pi(s)A$, for any $s\in S$. Then $\pi'(s^{\ast})=A^{-1}\pi(s^{\ast})A=A^{-1} \sigma(P) \pi(s)^{\ast} \sigma(P)^{-1}A=A^{-1} Q \pi(s)^{\ast} Q^{-1}A=A^{\ast}\pi(s)^{\ast}(A^{\ast})^{-1}=\pi'(s)^{\ast}$. Hence $\pi'$ is a matrix $\ast$-representation.  \\
$(2) \Rightarrow(1)$ Assume that $\pi$ is isomorphic to a (matrix) $\ast$-representation $\pi'$. Then $\tr \pi(s^{\ast})=\tr \pi'(s^{\ast})=\tr \pi'(s)^{\ast}=\tr \pi(s)^{\ast}$, for any $s\in S$. Assume $p_{ji}\in G$. Then for any $g\in G$, $s=(p_{ji}^{-1}g)_{ij}$, $$\tr \pi(s)^{\ast}=\tr \sigma(p_{ji} p_{ji}^{-1}g)^{\ast}=\tr \sigma(g)^{\ast}$$
$$\tr \pi(s^{\ast})=\tr\sigma(zp_{ij}[p_{ji}^{-1}g]^{\ast})=\tr\sigma(zp_{ij}g^{\ast}[p_{ji}^{-1}]^{\ast})$$
$$=\tr\sigma(p_{ij}g^{\ast}p_{ij}^{-1})=\tr\sigma(g^{\ast}).$$
Hence  $\tr \sigma(g)^{\ast}=\tr\sigma(g^{\ast})$, for any $g\in G$. Hence $D(\sigma)\circ \ast \simeq \check{\sigma}$. By Lemma \ref{finast}, $\sigma$ is isomorphic to a $\ast$-representation.\\
$(1) \Rightarrow(3)$  Assume that $\sigma$ is an irreducible matrix $\ast$-representation of dimension $k$. By the isomorphism,  we assume $\check{\sigma}(g)=\sigma(g)^{\ast T}=\sigma(g^{\ast})^T$,  for any $g\in G$. Hence for any $s=(a)_{ij}$, $\check{\pi}(s)=\check{\sigma}(P(a)_{ij})$, and $D(\pi)(s^{\ast})=\pi(s^{\ast})^T=\sigma(P(a)_{ij}^{\ast})^T=\sigma(P(za^{\ast})_{ji})^T$.
$$\check{\pi}(s)=\check{\sigma}(P(a)_{ij})$$
$$=  \bordermatrix{
  &  &        &  & j       &   &       &  \cr
  &0 & \cdots & 0& \check{\sigma}(p_{1i}a) &0 & \cdots & 0 \cr
  &0 & \cdots & 0& \check{\sigma}(p_{2i}a) &0 & \cdots & 0 \cr
  &\vdots &    & \vdots  & \vdots &\vdots&  & \vdots \cr
  &0 & \cdots & 0& \check{\sigma}(p_{ni}a) &0 & \cdots & 0\cr} $$
$$=\bordermatrix{
  &  &        &  & j       &   &       &  \cr
  &0 & \cdots & 0& \sigma(a^{\ast}p^{\ast}_{1i})^T &0 & \cdots & 0 \cr
  &0 & \cdots & 0&\sigma(a^{\ast}p^{\ast}_{2i})^T &0 & \cdots & 0 \cr
  &\vdots &    & \vdots  & \vdots &\vdots&  & \vdots \cr
  &0 & \cdots & 0& \sigma(a^{\ast}p^{\ast}_{ni})^T &0 & \cdots & 0\cr}$$
  $$=\begin{pmatrix}
  0 & 0 &   \cdots  & 0  \\
  \vdots & \vdots &  & \vdots \\
  0 & 0 &   \cdots  & 0  \\
  \sigma(a^{\ast}p^{\ast}_{1i})& \sigma(a^{\ast}p^{\ast}_{2i}) & \cdots & \sigma(a^{\ast}p^{\ast}_{ni})\\
  0 & 0 &   \cdots  & 0  \\
  \vdots & \vdots &  & \vdots \\
  0 & 0 &   \cdots  & 0\end{pmatrix}^T$$
$$=\begin{pmatrix}
0 & 0 &   \cdots  & 0  \\
\vdots & \vdots &  & \vdots \\
0 & 0 &   \cdots  & 0  \\
\sigma(za^{\ast}p_{i1})& \sigma(za^{\ast}p_{i2}) & \cdots & \sigma(za^{\ast}p_{in})\\
0 & 0 &   \cdots  & 0  \\
\vdots & \vdots &  & \vdots \\
0 & 0 &   \cdots  & 0
\end{pmatrix}^T$$
$$= \sigma((za^{\ast})_{ji} P)^T= \sigma(P)^T\sigma((za^{\ast})_{ji})^T.$$
$$D(\pi)(s^{\ast})=\sigma(P(za^{\ast})_{ji})^T=\bordermatrix{
  &  &        &  & i       &   &       &  \cr
 &0 & \cdots & 0& \sigma(zp_{1j}a^{\ast}) &0 & \cdots & 0 \cr
 &0 & \cdots & 0&\sigma(z p_{2j}a^{\ast}) &0 & \cdots & 0 \cr
 &\vdots &    & \vdots  & \vdots &\vdots&  & \vdots \cr
&0 & \cdots & 0& \sigma(zp_{nj}a^{\ast}) &0 & \cdots & 0}^T$$
$$=[\sigma(za^{\ast})_{ji}]^T\sigma(P)^T.$$
So $D(\pi)(s^{\ast})=(\sigma(P)^T)^{-1}\check{\pi}(s)\sigma(P)^T$, for all $s\in S$, and $D(\pi)\circ \ast \simeq \check{\pi}$. \\
$(3) \Rightarrow(1)$ $\tr \check{\pi}(s)=\tr D(\pi)(s^{\ast})$, for any $s=(a)_{ij} \in S$. By above discussion,
$$\tr \check{\pi}(s)= \tr \check{\sigma}(p_{ji}a), \qquad\quad \tr D(\pi)(s^{\ast})= \tr \sigma(zp_{ij}a^{\ast}).$$
Assume $p_{ji}\in G$, and $a=p_{ji}^{-1}g$, for $g\in G$. Then:
 $$\tr \check{\sigma}(g)=\tr \check{\sigma}(p_{ji}a)=\tr \sigma(zp_{ij}g^{\ast}p_{ij}^{-1}z^{\ast})
 =\tr \sigma(p_{ij}g^{\ast}p_{ij}^{-1})=\tr \sigma(g^{\ast}).$$ Hence $\check{\sigma} \simeq D(\sigma)\circ \ast$. By Lemma \ref{finast}, $\sigma$ is isomorphic to a $\ast$-representation.
\end{proof}

\begin{lemma}
Keep the notations of Corollary \ref{regular}.
Every finite dimensional  representation of $(S, \ast)$ is isomorphic to  a   semiunitary  $\ast$-representation iff (1) $P$ is an inverse matrix over $\C[G]$, (2) $p_{ii}=g\in G$, for all $i\in I$,   some  $g\in G$ with $g^2z=1$ and $g^{-1}g^{\ast}\in Z(G)$, (3) $a^{\ast}=ga^{-1} g^{-1}$, for any  $a\in G$.
\end{lemma}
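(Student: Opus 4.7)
The plan is to prove both directions of the equivalence using the apparatus already assembled, namely Lemmas \ref{astrepr}, \ref{astrepr1}, \ref{finast}, and \ref{preunitary}, together with Munn's semiunitarity theorem for inverse semigroups.

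For the direct implication, I would apply the hypothesis to the pair of standard Sch\"utzenberger representations $(\pi^l, \pi^r)$ built from the full direct sum of irreducible representations of $G$. Under the assumption, both are isomorphic to semiunitary $\ast$-representations, so Lemma \ref{astrepr}(2) forces $S$ to be an inverse semigroup with $\ast$ equal to the inverse operation; in particular $P$ is invertible over $\C[G]$, giving (1). Conditions (2) and (3) then emerge by unpacking the identity $s s^{\ast} s = s$ for $s = (a)_{ij}$: the Rees multiplication collapses it to $p_{jj} z a^{\ast} p_{ii} = a^{-1}$ for every $a \in G$ and every $i, j \in I$. Taking $a = 1_G$ shows that all diagonal entries $p_{ii}$ coincide with a single element $g$ satisfying $gzg = 1$, i.e.\ $g^{2} z = 1$; the general case then reads $g z a^{\ast} g = a^{-1}$, i.e.\ $a^{\ast} = g a^{-1} g^{-1}$. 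The relation $g^{-1} g^{\ast} = z \in Z(G)$ follows automatically from $g^{\ast} = g^{-1}$ and the centrality of $z$.

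For the converse, conditions (2), (3), and the involutive symmetry $p_{ji}^{\ast} = z p_{ij}$ together with the invertibility of $P$ have to be leveraged to force $P = gI$; then Lemma \ref{ismir}(2) makes $S$ an inverse semigroup with $\ast$ as its inverse, and $\C[S]$ is semi-simple since $P$ is invertible. Condition (3) identifies the involution on $G$ with inversion twisted by the inner automorphism $\operatorname{Ad}(g)$, so a character comparison yields $D(\sigma) \circ \ast \simeq \check{\sigma}$ for every irreducible $\sigma \in \Irr(G)$; Lemma \ref{finast} then supplies a $\ast$-model of each such $\sigma$, and Lemma \ref{astrepr1} transports this to a $\ast$-model of each irreducible representation of $S$. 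Semi-simplicity extends the conclusion to arbitrary finite-dimensional $\pi$. The final step is to upgrade ``$\ast$-representation'' to ``semiunitary $\ast$-representation'' in the same basis: because $\ast$ now coincides with the semigroup inverse, any $\ast$-representation satisfies $\pi(s)\pi(s)^{\ast}\pi(s) = \pi(s s^{\ast} s) = \pi(s)$, so it is automatically semiunitary.

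The principal obstacle is the backward step in which conditions (1)--(3) are combined to eliminate the off-diagonal entries of $P$ and produce $P = gI$; condition (1) on its own only guarantees invertibility, so the argument has to interact with the involutive symmetry between $p_{ij}$ and $p_{ji}^{\ast}$ and with the diagonal normalization (2), essentially by revisiting the uniqueness-of-inverse argument of Lemma \ref{preunitary} in the present setting. A secondary, but less severe, subtlety is the coordination of the basis realizing the $\ast$-representation with the basis realizing semiunitarity, which is resolved by the identification $\pi(s^{\ast}) = \pi(s)^{\ast}$ once $S$ has been shown to be inverse.
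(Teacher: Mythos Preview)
Your forward direction is essentially the paper's argument: one obtains $ss^{\ast}s=s$ from any faithful semiunitary $\ast$-representation, and then the computation you sketch yields (2) and (3). (For (1) the paper goes through Lemma~\ref{comre} rather than Lemma~\ref{astrepr}: since every representation is isomorphic to a $\ast$-representation, $\C[S]$ is semi-simple and $P$ is invertible. Your route via Lemma~\ref{astrepr} is a bit delicate because that lemma needs $\pi^l,\pi^r$ \emph{themselves} to be semiunitary in the standard matrix form, not merely isomorphic to semiunitary representations.)

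The genuine gap is in the converse. Your plan hinges on forcing $P=gI$ from (1)--(3), but this is false. Take $G=\{1\}$, $z=g=1$, $I=\{1,2,3\}$, and
\[
P=\begin{pmatrix}1&1&0\\1&1&1\\0&1&1\end{pmatrix}.
\]
Then $P$ is symmetric (so $p_{ji}^{\ast}=zp_{ij}$ holds), invertible over $\C$, and $p_{ii}=g=1$ for all $i$; conditions (1)--(3) are satisfied, yet $P$ is not monomial and $S$ is not an inverse semigroup. So the ``principal obstacle'' you identify is in fact insurmountable, and the strategy via Lemma~\ref{ismir}(2) cannot succeed.

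The fix, which is exactly what the paper does, is to notice that you never need $S$ to be inverse: the computation $ss^{\ast}s=(ap_{jj}za^{\ast}p_{ii}a)_{ij}$ involves only \emph{diagonal} entries of $P$, so conditions (2) and (3) alone give $ss^{\ast}s=s$ for every $s\in S$, regardless of the off-diagonal $p_{ij}$. Once you have this identity, the rest of your argument goes through verbatim: (3) gives $\check{\sigma}\simeq D(\sigma)\circ\ast$ for each $\sigma\in\Irr(G)$, Lemma~\ref{finast} produces a $\ast$-model of $\sigma$, Lemma~\ref{astrepr1} transports it to a $\ast$-model $\pi'$ of the corresponding irreducible of $S$, and then $\pi'(s)\pi'(s)^{\ast}\pi'(s)=\pi'(ss^{\ast}s)=\pi'(s)$ gives semiunitarity directly.
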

\begin{proof}
$(\Rightarrow)$ By Lemma \ref{comre}, $\C[S]$ is semi-simple, so $P$ is a non-singular  matrix over $\C[G]$. Let  $\pi$ be a faithful matrix semiunitary $\ast$-representation of $S$. Then $\pi(s^{\ast})=\pi(s)^{\ast}$, $\pi(ss^{\ast}s)=\pi(s)\pi(s^{\ast})\pi(s)=\pi(s)\pi(s)^{\ast}\pi(s)=\pi(s)$, which implies that $ss^{\ast}s=s$.
Let $s=(a)_{ij}\in S$. Then $ss^{\ast}s=(a)_{ij}\circ (za^{\ast})_{ji}\circ (a)_{ij}=(zap_{jj}a^{\ast}p_{ii}a)_{ij}=(a)_{ij}$. Hence $zap_{jj}a^{\ast}p_{ii}a=a$, for all $a\in G$, $i,j\in I$. Hence $p_{ii} \neq 0$. Take $a=1\in G$, $p_{ii}=g$, for all $i\in I$, and $g^2=\frac{1}{z}$. So $a^{\ast}=ga^{-1}g^{-1}$. Since $\ast$ is an involution on $G$, $g^{-1}g^{\ast}\in Z(G)$. \\
$(\Leftarrow)$ In this case, $ss^{\ast}s=s$, for any $s\in S$. By Lemma \ref{finast}, every finite dimensional representation of $G$ is isomorphic to a $\ast$-representation. Within isomorphism, we only need to consider the standard representation $\pi$ associated to an irreducible $\ast$-representation $\sigma$ of $G$. By Lemma \ref{astrepr1}, $\pi$ is isomorphic to a $\ast$-representation $\pi'$. Hence $\pi'(s)\pi'(s)^{\ast}\pi'(s)=\pi'(s)\pi'(s^{\ast})\pi'(s)=\pi'(ss^{\ast}s)=\pi'(s)$. So $\pi'$ is a semiunitary $\ast$-representation.
\end{proof}

\subsection{A principal series}\label{aprinser}
Let us recall some known results from \cite[Chap.2.6]{CP1}, \cite[Part I]{Stein}. Let $S$ be a finite semigroup, $S^1=S\cup\{1\}$(if necessary). For $a\in S$, we call $S^1a$ the left ideal generated by $a$. Similarly, call $aS^1$ the right ideal generated by $a$, and call $S^1aS^1$ the two-sided ideal generated by $a$.  Let $L_a$(resp. $R_a$, $J_a$) denote the set of generators of $S^1a$ (resp. $aS^1$, $S^1aS^1$) in $S$.   Notice that if $S^1=S\sqcup\{1\}$, then $J_1=\{1\}$, $L_1=\{1\}$, $R_1=\{1\}$ in $S^1$.

Let $\emptyset=I_0 \subsetneq I_1 \subsetneq \cdots  \subsetneq I_n=S$ be  a principal series in  $S$ such that  each $I_i$ is a two-sided ideal of $S$, and $I_{i+1}\setminus I_i=J_{a}$, for some $a\in S$. Let $G_a=L_a \cap R_a$, and $L_a=\sqcup_{i=1}^{s_a} x_i\circ_a G_a$, $R_a=\sqcup_{j=1}^{t_a} G_a\circ_a y_j$, $J_a=\sqcup_{i=1}^{s_a}\sqcup_{j=1}^{t_a} x_i\circ_a G_a \circ_a y_j$.  Let $J_a^0=J_a\cup\{0\}$.  Follow the notation of \cite[p.71]{Stein}, for $x, y\in J_a^0$,   let  us define $y\Diamond x =\left\{\begin{array}{lr} yx & \textrm{ if } yx\in J_a\\
0& \textrm{ if } yx\notin J_a\end{array}\right.$. Then $J_a^0$ is a finite semigroup.  In particular, let  $p_{ji}= y_j \Diamond x_i$. If $y_j \Diamond x_i \in J_a $, then $ p_{ji} \in J_a \cap  aS^1\cap S^1a= G_a$; if $p_{ji} \notin J_a$, then $p_{ji}\notin G_a$. Hence $p_{ji}\in G_a^0$.  These elements $p_{ji}$  form a matrix  $P=(p_{ji})$ of type $t_a\times s_a$,   called a sandwich matrix of $J_a$.  Let $I=\{1, \cdots, s_a\}$, $J=\{1, \cdots, t_a\}$. Then Ree's theorem tells us that
$J_a^0 \simeq \mathcal{M}^0(I, J, G_a, P)$. More precisely, this isomorphism can be given as follows:

For $b=x_i\circ_a g_b\circ_a y_j\in J_a$, let it map to the  element $(g_b)_{ij}$ of $\mathcal{M}^0(I, J,G_a,  P)$, and let $0$ map to the zero matrix. If  $c=x_k\circ_a g_c\circ_a y_l$,  then $b\Diamond c=x_i\circ_a g_b\circ_a y_jx_k\circ_a g_c\circ_a y_l=x_i\circ_a g_b\circ_a p_{jk} \circ_a g_c\circ_a y_l$. \footnote{ Let us show that $b\Diamond c \in J_a$, then $ y_jx_k\in G_a$.  For then, $S^1aS^1= S^1(b\Diamond c)S^1=S^1y_jx_kS^1$. Hence $y_jx_k\in J_a\cap aS^1\cap S^1a=G_a$. } Note that $(g_b)_{ij} \circ (g_c)_{kl}=(g_b \circ_ap_{jk}\circ_ag_c)_{il}$, which is the image of $b\Diamond c$ in $\mathcal{M}^0( I, J,G_a, P)$.

Let us change $x_i$, $y_j$ by $x_i'=x_i\circ_a g_i$, $y_j'=h_j\circ_a y_j$, for some $g_i, h_j\in G_a$.  Then $p_{ji}'=y_j'x_i'=h_j\circ_ap_{ji}\circ_a g_i$. Let $P'=(p_{ji}')$. Then $P'=\diag(h_1, \cdots, h_{t_a}) P\diag(g_1, \cdots, g_{s_a})$. Therefore $\mathcal{M}^0( I, J,G_a, P) \simeq \mathcal{M}^0( I, J,G_a, P')$.

Let us recall the definition of  the left Sch\"utzenberger representation associated to $J_a$ by following \cite{LaPe} and \cite[Chap.5]{Stein}.
For $s\in S$,  if  $ sx_i\in L_a$, then $sx_i=x_j\circ_a g_{ji}$ for a unique $g_{ji}\in G_a$; otherwise, set $g_{ji}=0$. Then we can  define a left Sch\"utzenberger representation of $S$ over $\mathbb{C}[G_a]$ associated to $J_a$ by $M_J: \mathbb{C}[S] \longrightarrow M_{s_a}(\mathbb{C}[G_a]); s \longrightarrow (g_{ji})$. Using the notations from \cite[Chap.5.5]{Stein}, for a matrix representation $\sigma$ of $G_a$, the composite of $\sigma$ and $M_J$ gives a representation of $S$, denoted by $\pi=\Ind_{G_a}(\sigma)$.  For more details for such representation, one can see \cite[Thm. 1.7, Rem.]{LaPe},  or \cite[Thm. 5.5]{Stein}.  By \cite[Thm. 1.7, Rem.]{LaPe}, if $J_a$ is a regular $\mathcal{J}$-class, $\Ind_{G_a}(\sigma)$ is a proper representation; otherwise, there exists a counterexample given in the remark there. Let us see the behavior of the restriction  of  $\pi$ to $J_a^0$. For $b=x_i\circ_a g_b\circ_a y_j\in J_a$, $bx_k=x_i\circ_a g_b\circ_a y_jx_k=x_i\circ_a g_b\circ_a p_{jk}$.  Therefore $g_{ik}= g_b\circ_a p_{jk}$,   $M_J(b)= (g_{ik})=(g_b)_{ij} P$, $\sigma(M_J(b))=\sigma((g_b)_{ij} P)$. Therefore, the restriction of  $\pi$ to $J_a^0 \simeq \mathcal{M}^0( I, J,G_a, P)$ is just  a left standard  representation of $\mathcal{M}^0(I, J, G_a, P)$. In particular, for the $\sigma$ associated to $G_a$ in Def.\ref{lr}, the restriction of  $\pi$ to $J_a^0 \simeq \mathcal{M}^0( I, J,G_a, P)$ is just  a left matrix Sch\"utzenberger representation(cf. Def.\ref{lr}) of $\mathcal{M}^0(I, J, G_a, P)$.

Let us also consider the right  Sch\"utzenberger representation associated to $J_a$ by following \cite{LaPe} and \cite{Stein}. For   any $s\in S$,    if $y_j s\in R_a$, then $y_js=h_{ji} \circ_a y_i$, for a unique $h_{ji} \in G_a$; otherwise, set $h_{ji}=0$.  A right Sch\"utzenberger representation of $S$  is given  by $M_J: \mathbb{C}[S] \longrightarrow M_{s_a}(\mathbb{C}[G_a]); s \longrightarrow (h_{ji})$. The composite of $\sigma$ and $M_J$ gives a representation of $S$, denoted by $\pi'=\Coind_{G_a}(\sigma)$. Let us see the behavior of  the restriction of  $\pi'$ to $J_a^0$. For above $b\in J_a$, $y_kb=y_kx_i\circ_a g_b\circ_a y_j=p_{ki}\circ_a g_b\circ_a y_j$. Hence $h_{kj}=p_{ki}\circ_a g_b$, $M_J(b)= (h_{kj})=P(g_b)_{ij} $, $\sigma(M_J(b))=\sigma(P(g_b)_{ij} )$.  The restriction of  $\pi'$ to $J_a^0 \simeq \mathcal{M}^0( I, J,G_a, P)$ is   a right standard  representation. For the $\sigma$ associated to $G_a$ in Def.\ref{lr}, the restriction of  $\pi$ to $J_a^0 \simeq \mathcal{M}^0( I, J,G_a, P)$ is  a right matrix Sch\"utzenberger representation(cf. Def.\ref{lr}) of $\mathcal{M}^0(I, J, G_a, P)$.

  Let   $\mathcal{A}=\{ (\sigma_1, V_1), \cdots,  (\sigma_k, V_k)\}$  be a set of all pairwise inequivalent irreducible  matrix  representations of $G_a$.
 \begin{definition}
 If  $\sigma=\oplus_{i=1}^k \sigma_i$,  we will call $\pi=\Ind_{G_a}(\sigma)$ (resp. $\pi'=\Coind_{G_a}(\sigma)$)  a left (resp. right)matrix Sch\"utzenberger representation of $S$, associated to $J_a$.
 \end{definition}
 \begin{remark}
For above $\sigma$,  the restriction of $\Ind_{G_a}(\sigma)$(resp. $\Coind_{G_a}(\sigma)$) to $S_a=\mathcal{M}^0(I, J, G_a, P)$ is just the corresponding left (resp. right)matrix Sch\"utzenberger representation of $S_a$.
 \end{remark}
 The following  is one main result of Munn's paper \cite{Mu78}. For our purpose,  we need the  matrix version without  isomorphism. We shall follow the proof of \cite[Lmm.7]{Mu78}.
\begin{lemma}\label{inv}
Keep the above notations. Assume now that $S$ is an inverse semigroup. If
 \begin{itemize}
 \item[(1)] $a$ is an idempotent element $e$,
 \item[(2)]   $P$ is the identity  matrix,
  \item[(3)]  $\sigma$ is a unitary matrix representation,
  \end{itemize}
  then  $\pi=\Ind_{G_e}(\sigma)$, $\pi'=\Coind_{G_e}(\sigma)$ both are  semiunitary  representations.
\end{lemma}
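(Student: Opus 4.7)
The plan is to show that under the inverse-semigroup hypothesis with $P=I$, the Sch\"utzenberger matrix $M_J(s)$ attached to every $s\in S$ is \emph{block monomial}: it has at most one non-zero entry per row and per column, with every non-zero entry lying in $G_e$. Combined with the unitarity of $\sigma$, this will force $\pi(s)\pi(s)^{\ast}$ to be an idempotent matrix; the early equivalence between pre-unitarity of $A$ and idempotence of $AA^{\ast}$ then yields that $\pi(s)$ is preunitary, and carrying this out for every $s\in S$ shows $\pi$ is semiunitary.

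Column-monomiality of $M_J(s)=(g_{ji})$ is immediate from the definition: for each $i$, either $sx_i\notin L_e$ (so column $i$ is zero) or $sx_i=x_j g_{ji}$ with $j$ and $g_{ji}\in G_e$ unique. The decisive step is row-monomiality. Write $s^{-1}$ for the unique inverse of $s$ in $S$. Suppose $sx_{i_1}=x_j g_1$ and $sx_{i_2}=x_j g_2$ are both non-zero. Applying $a\mapsto a a^{-1}$ to both identities, and using $g_l g_l^{-1}=e$ together with $x_{i_l}^{-1}x_{i_l}=e$ (since $x_{i_l}\in L_e$), gives
\[
s\,(x_{i_1}x_{i_1}^{-1})\,s^{-1}\;=\;x_j x_j^{-1}\;=\;s\,(x_{i_2}x_{i_2}^{-1})\,s^{-1}.
\]
In an inverse semigroup the hypothesis $sx_{i_l}\in L_e$ is equivalent, via the standard identity $x^{-1}fx=x^{-1}x\Leftrightarrow f\geq xx^{-1}$ (valid for any $x\in S$ and idempotent $f$), to $s^{-1}s\geq x_{i_l}x_{i_l}^{-1}$. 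Conjugating the displayed identity on the left by $s^{-1}$ and on the right by $s$, then using commutativity of idempotents in $S$ to absorb the $s^{-1}s$ factors, produces $x_{i_1}x_{i_1}^{-1}=x_{i_2}x_{i_2}^{-1}$, i.e., $x_{i_1}\,\mathcal{R}\,x_{i_2}$. Being both in $L_e$, the two coset representatives are then $\mathcal{H}$-equivalent, hence in a common right coset $x_i G_e$, forcing $i_1=i_2$.

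With the block-monomial structure established, a direct block computation finishes the argument for $\pi$: the matrix $\pi(s)\pi(s)^{\ast}$ is block-diagonal (column-monomiality kills off-diagonal blocks) with each diagonal block equal to either $\sigma(g_{j,i(j)})\sigma(g_{j,i(j)})^{\ast}=I$ (by unitarity of $\sigma$ on $G_e$) or $0$ (by row-monomiality), and such a matrix is visibly idempotent. For the right Sch\"utzenberger representation $\pi'=\Coind_{G_e}(\sigma)$ the roles of rows and columns are swapped: row-monomiality of $(h_{ji})$ is immediate from the definition $y_j s=h_{ji} y_i$, while column-monomiality follows by the dual of the argument above, using $y_j y_j^{-1}=e$, the equivalence $y_j s\in R_e\Leftrightarrow ss^{-1}\geq y_j^{-1}y_j$, and $\mathcal{H}$-equivalence inside $R_e$. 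The entire proof hinges on the row (respectively column) monomial claim above: this is the one step where the inverse-semigroup hypothesis (unique inverses and commuting idempotents) is genuinely used, and it constitutes the main technical obstacle; everything else is block-matrix book-keeping.
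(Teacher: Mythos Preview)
Your argument is correct, but it takes a different route from the paper's proof. The paper does not isolate the block-monomial structure of $M_J(s)$. Instead, with $P=I$ it observes that the idempotents $e_i=x_iy_i$ satisfy $e_i^{\ast}=e_i$ and $\sum_i\pi(e_i)=E$; writing $\pi(s)=\sum_{i,j}E_i\pi(s)E_j$ and tracking blocks, one sees that the $(j,i)$-entry of $M_J(s)$ is $y_j s x_i$ (or $0$), whose inverse is $y_i s^{\ast} x_j$, which is the $(i,j)$-entry of $M_J(s^{\ast})$. Unitarity of $\sigma$ on $G_e$ then gives $\pi(s^{\ast})=\pi(s)^{\ast}$ for all $s$, so $\pi$ is in fact a $\ast$-representation, and $ss^{\ast}s=s$ immediately yields preunitarity.

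The trade-offs: the paper's approach is shorter, avoids the $\mathcal{R}$-class argument entirely, and proves the stronger statement that $\pi$ is a $\ast$-representation with respect to the inverse involution (not merely semiunitary). Your approach, on the other hand, makes explicit the well-known fact that Sch\"utzenberger matrices over an inverse semigroup are row- and column-monomial, which is structurally informative even if not needed here; once that is in hand, your idempotence computation for $\pi(s)\pi(s)^{\ast}$ is clean and appeals to the earlier equivalence $AA^{\ast}$ idempotent $\Leftrightarrow$ $A$ preunitary. Both routes use the inverse-semigroup hypothesis in an essential way: you use it for commuting idempotents and unique inverses in the row-monomial step; the paper uses it for $e_i^{\ast}=e_i$ and $ss^{\ast}s=s$.
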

\begin{proof}
Let us write $L_e=\sqcup_{i=1}^{s} x_i\circ_e G_e$, $R_e=\sqcup_{j=1}^{s} G_e\circ_e y_j$, $J_e=\sqcup_{i=1}^{s}\sqcup_{j=1}^{s} x_i\circ_e G_e \circ_e y_j$; $y_jx_i=\delta_{ij} e$. Write also $e_i=x_i\circ_e y_i=x_iy_i$. Then $e_i^2=x_iy_ix_iy_i=x_iey_i=e_i\in E(S)$, and $e_1, \cdots, e_s$ are all idempotent elements in $J_e$.  For each $e_i$, $e_i x_k=x_iy_i x_k=\delta_{ik}x_i$. For $\pi$,  $M_{J_e}: e_i \longrightarrow (e)_{ii}$.  Let $E_i=\pi(e_i)= \bordermatrix{
  &   & i  &  \cr
  &   &  &  \cr
 i&   & \sigma(e) &\cr
  &   &  &  \cr}$. Hence $\sum_{i=1}^s \pi(e_i)=\sum_{i=1}^s E_i=E$, and $\pi(s)=E\pi(s)E=\sum_{i,j}^s \sigma(e_ise_j)$, for  $s\in S$. For any $s\in S$, $\pi(s^{\ast})=\sum_{i,j}^s \sigma(e_is^{\ast}e_j)=\sum_{i,j}^s \sigma(e_i^{\ast}s^{\ast}e_j^{\ast})=\sum_{i,j}^s\sigma([e_jse_i]^{\ast})=\sum_{i,j}^s\sigma([e_jse_i])^{\ast}=\pi(s)^{\ast}$. As $ss^{\ast}s=s$, $\pi(s)\pi(s)^{\ast}\pi(s)=\pi(s)\pi(s^{\ast})\pi(s)=\pi(ss^{\ast}s)=\pi(s)$. Clearly, $\pi$ is a semiunitary representation. The proof for $\pi'$ is similar.
\end{proof}

\section{The results}\label{theresults}
Let $S$ be a finite  regular semigroup.
\begin{theorem}\label{invconditions1}
$S$ is an inverse semigroup iff   there exists a pair of  semiunitary  matrix  Sch\"utzenberger representations of $S$ associated to  each $\mathcal{J}$-class.
\end{theorem}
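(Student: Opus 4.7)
The plan is to prove each direction separately, reducing the main work on the sufficiency side to Lemma \ref{astrepr}(1) applied on each principal factor.

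\textbf{Necessity $(\Rightarrow)$.} Suppose $S$ is inverse. Fix a $\mathcal{J}$-class; since $S$ is regular, I may choose an idempotent representative $e$. Applied to the principal factor $J_e^0 \simeq \mathcal{M}^0(I, J, G_e, P)$, Lemma \ref{ismir}(2) lets me rechoose the structural representatives $x_i, y_j$ so that $|I|=|J|$ and the sandwich matrix $P$ is the identity matrix. Let $\sigma = \oplus_{i=1}^k \sigma_i$ be the direct sum of all pairwise inequivalent irreducible unitary matrix representations of the (finite) maximal subgroup $G_e$. Then $\pi^l = \Ind_{G_e}(\sigma)$ and $\pi^r = \Coind_{G_e}(\sigma)$ are the matrix Sch\"utzenberger representations of $S$ attached to $J_e$, and Lemma \ref{inv} immediately delivers semiunitarity of both.

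\textbf{Sufficiency $(\Leftarrow)$.} Conversely, assume that for each regular $\mathcal{J}$-class $J_e$ of $S$ there is a pair $(\pi^l, \pi^r)$ of semiunitary matrix Sch\"utzenberger representations of $S$ associated to $J_e$. By the computations recorded in Section \ref{aprinser}, the restrictions of $\pi^l$ and $\pi^r$ to the subsemigroup $J_e^0$, transported along the Rees isomorphism $J_e^0 \simeq \mathcal{M}^0(I, J, G_e, P)$, are exactly the left and right matrix Sch\"utzenberger representations of this Rees matrix semigroup in the sense of Definition \ref{lr}. The preunitary identity $AA^\ast A = A$ is pointwise on the image, so it survives restriction (the zero of $J_e^0$ maps to the zero matrix, which is trivially preunitary). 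Lemma \ref{astrepr}(1) then forces $\mathcal{M}^0(I, J, G_e, P)$ to be an inverse semigroup, hence a Brandt semigroup. Since this is true for every $\mathcal{J}$-class, every principal factor of $S$ is a Brandt semigroup; equivalently, each $\mathcal{L}$-class and each $\mathcal{R}$-class of $S$ (each of which is contained in a single $\mathcal{J}$-class) contains exactly one idempotent. By the standard characterization of inverse semigroups among regular ones, $S$ is inverse.

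\textbf{Main obstacle.} The conceptual content is already packaged inside Lemma \ref{astrepr} and Lemma \ref{inv}; what remains is essentially bookkeeping to move between $S$ and its principal factors. The one step that requires some care is the passage from ``every principal factor is inverse'' back to ``$S$ is inverse'': one must invoke the characterization of a regular semigroup as inverse via uniqueness of idempotents in each Green's $\mathcal{L}$- and $\mathcal{R}$-class, together with the fact that these classes in $S$ coincide with their counterparts in the corresponding principal factor. Verifying that the semiunitary condition on $\pi^l, \pi^r$ of $S$ truly restricts to the semiunitary condition on the standard representations of the Rees factor is the other point to handle explicitly, but it follows at once from the explicit matrix formulas of Section \ref{aprinser}.
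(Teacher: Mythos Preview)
Your argument is correct and follows essentially the same route as the paper: both directions invoke Lemma~\ref{inv} for necessity, and for sufficiency restrict the given semiunitary pair to each principal factor $J_e^0\simeq\mathcal{M}^0(I,J,G_e,P)$ and apply Lemma~\ref{astrepr}(1) to conclude that every principal factor is inverse. The only cosmetic difference is in the final lift back to $S$: the paper argues directly that any inverse $b'$ of $b$ in $S$ must satisfy $b'\,\mathcal{J}\,b$ and hence coincide with the unique inverse inside $J_b^0$, whereas you quote the equivalent classical characterization via unique idempotents in each $\mathcal{L}$- and $\mathcal{R}$-class.
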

\begin{theorem}\label{invconditions2}
 Let $\ast$ be an involution on $S$. Then  $S$ is an inverse semigroup with the inverse map given by $\ast$ iff there exists a pair of  semiunitary   matrix  Sch\"utzenberger $\ast$-representations of $S$ associated to  each $\mathcal{J}$-class.

\end{theorem}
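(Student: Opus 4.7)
The plan is to prove both directions by reducing to each principal factor $J_a^0 \simeq \mathcal{M}^0(I, J, G_a, P)$ and applying Lemma \ref{astrepr}(2).

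For the forward direction, I would assume $S$ is inverse with inverse map $\ast$ and note that every principal factor $J_a^0$ is then Brandt, so Lemma \ref{ismir}(2) lets us take the sandwich matrix $P$ to be the identity. For each $\mathcal{J}$-class $J_a$, let $\sigma = \bigoplus_i \sigma_i$ range over a complete set of irreducible unitary representations of $G_a$, and set $\pi = \Ind_{G_a}(\sigma)$, $\pi' = \Coind_{G_a}(\sigma)$. Lemma \ref{inv} provides semiunitarity, and inspecting its proof shows that $\pi(s^\ast) = \pi(s)^\ast$ (and likewise for $\pi'$) holds for all $s \in S$; thus $(\pi,\pi')$ is the required pair of semiunitary Sch\"utzenberger $\ast$-representations.

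For the reverse direction, the crucial first step is to show that $\ast$ preserves each $\mathcal{J}$-class. Fix $J_a$ and let $(\pi^l_a, \pi^r_a)$ be the chosen pair. By the apex property of Sch\"utzenberger representations, $\pi^l_a(s) = 0$ iff $J_a \nleq J_s$. Thus for $s \in J_a$ we have $\pi^l_a(s) \neq 0$, so $\pi^l_a(s^\ast) = \pi^l_a(s)^\ast \neq 0$, forcing $J_a \leq J_{s^\ast}$. Running the same argument with $s^\ast$ in place of $s$ (using the Sch\"utzenberger pair associated to $J_{s^\ast}$ together with $(s^\ast)^\ast = s$) gives $J_{s^\ast} \leq J_a$, hence $s^\ast \in J_a$. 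Consequently, extending by $0^\ast = 0$, $\ast$ restricts to an involution on each $J_a^0$. Now the discussion in \S\ref{aprinser} identifies the restriction of $(\pi^l_a, \pi^r_a)$ to $J_a^0$ with a pair of matrix Sch\"utzenberger representations of $\mathcal{M}^0(I, J, G_a, P)$, and both semiunitarity and the $\ast$-property descend. Lemma \ref{astrepr}(2) then says each $J_a^0$ is an inverse semigroup whose inverse map is the restriction of $\ast$. In particular each sandwich matrix can be chosen to be the identity, making each $J_a^0$ Brandt; combined with regularity of $S$, the classical principal-factor characterization implies $S$ is inverse, and since $s s^\ast s = s$ and $s^\ast s s^\ast = s^\ast$ hold on every $J_a$, the involution $\ast$ is the inverse map of $S$.

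The main obstacle is the apex argument: translating the analytic identity $\pi(s^\ast) = \pi(s)^\ast$ into the combinatorial statement that $\ast$ preserves each $\mathcal{J}$-class is what legitimizes the reduction to each $J_a^0$ and the application of Lemma \ref{astrepr}(2). Once this is secured, the remaining steps are routine transfer of properties and appeals to Ree's structure theory.
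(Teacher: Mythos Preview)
Your proof is correct and follows the paper's route: restrict to each principal factor $J_a^0$ and invoke Lemma~\ref{astrepr}(2). The paper is terser---it first cites Theorem~\ref{invconditions1} to obtain that $S$ is inverse and then appeals to Lemma~\ref{astrepr} on each $J_a^0$---whereas you make explicit the one point the paper leaves implicit, namely that $\ast$ must carry each $\mathcal J$-class to itself so that Lemma~\ref{astrepr}(2) legitimately applies there. Your apex argument for this is valid (for a regular $\mathcal J$-class one checks directly from the definition of $M_J$ that $\pi^l_a(t)\neq 0$ exactly when $J_a\le J_t$), and your concluding ``principal-factor characterization'' is the same uniqueness argument used in the paper's proof of Theorem~\ref{invconditions1}: any inverse of $b$ in $S$ already lies in $J_b$, so uniqueness of inverses in each $J_b^0$ forces uniqueness in $S$.
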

\begin{remark}\label{invcoro1}
There exists at most one involution on $S$ such that $S$ is an inverse semigroup with  the inverse map given by such involution.
\end{remark}

\begin{theorem}\label{fini}
 Let $\ast$ be an involution on $S$. Then every finite dimensional  representation of  $(S, \ast)$ is  isomorphic to a $\ast$-representation iff $\C[S]$ is a semi-simple  algebra and $\check{\pi} \simeq D(\pi)\circ \ast$, for any $(\pi, V)\in  \Irr(S)$.
\end{theorem}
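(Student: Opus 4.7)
The plan is to prove the biconditional by reducing, via the Sch\"utzenberger-type induction $\pi = \Ind_{G_e}\sigma$, to the corresponding statements already proven for finite groups (Lemma \ref{finast}) and for Ree's matrix semigroups (Lemma \ref{astrepr1}).

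For the forward direction ($\Rightarrow$), semi-simplicity of $\C[S]$ is immediate from Lemma \ref{comre}: since every $\ast$-representation is completely reducible, the hypothesis forces every finite-dimensional representation of $S$ to be completely reducible, which is equivalent to semi-simplicity of $\C[S]$. For the equivalence $\check\pi \simeq D(\pi)\circ\ast$, let $\pi \in \Irr(S)$ and let $\pi'$ be an isomorphic matrix $\ast$-representation. Since $\pi'(s^\ast) = \pi'(s)^\ast$ gives $(D(\pi')\circ \ast)(s) = \pi'(s^\ast)^T = \overline{\pi'(s)}$, I obtain $D(\pi)\circ\ast \simeq \overline{\pi}$, the complex-conjugate representation. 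On the other hand, writing $\pi = \Ind_{G_e}\sigma$ and hence $\check\pi = \Ind_{G_e}\check\sigma$, I would use that every irreducible representation of the finite group $G_e$ is unitarizable, so $\check\sigma \simeq \overline{\sigma}$, and then check via the Sch\"utzenberger matrix formulas that $\Ind_{G_e}\overline{\sigma} \simeq \overline{\Ind_{G_e}\sigma}$. Combining yields $\check\pi \simeq D(\pi)\circ\ast$.

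For the backward direction ($\Leftarrow$), by semi-simplicity every finite-dimensional representation decomposes into irreducibles, so it suffices to realize each $\pi \in \Irr(S)$ as a $\ast$-representation. Let $J_e$ be the apex of $\pi$; since $\check\pi$ has apex $J_e$ while $D(\pi)\circ\ast$ has apex $\ast(J_e)$, the isomorphism forces $J_e$ to be $\ast$-stable. Therefore $\ast$ restricts to an involution on $J_e^0 \simeq \mathcal{M}^0(I, I, G_e, P)$ in the standard form of Corollary \ref{regular}, and semi-simplicity of $\C[S]$ guarantees $P$ is non-singular over $\C[G_e]$. The restriction of $\pi = \Ind_{G_e}\sigma$ to $J_e^0$ is the left standard representation associated to $\sigma$, and the restriction of the given isomorphism $\check\pi \simeq D(\pi)\circ\ast$ to $J_e^0$ allows Lemma \ref{astrepr1} to apply, yielding that $\sigma$ is isomorphic to a $\ast$-representation of $G_e$. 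Finally, following the construction at the end of the proof of Lemma \ref{astrepr1}, I build the Hermitian form $Q = \sigma(P)\omega$, factor $Q = AA^\ast$, and define $\pi' = A^{-1}\pi A$; this should give a $\ast$-representation of $S$.

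The main obstacle is this last promotion step: the matrix $A$ is constructed purely from the Ree's matrix data of $J_e^0$, yet one needs $\pi'(s^\ast) = \pi'(s)^\ast$ to hold for every $s \in S$, not merely for $s \in J_e^0$. The argument should work because $\pi$ vanishes on ideals strictly below $J_e$ (so there is nothing to check there) and $\pi$ on $\mathcal{J}$-classes above $J_e$ is determined by $\sigma$ through the Sch\"utzenberger induction matrices; however, making this rigorous will require tracing through the explicit formulas for $\pi$ on elements outside $J_e^0$ and showing that they interact with the involution in the same way as on $J_e^0$, which in turn uses that the involution respects the Green's structure of $S$ compatibly with the apex $J_e$.
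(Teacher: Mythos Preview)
Your forward direction is correct and in fact more direct than the paper's: you observe that for a matrix $\ast$-representation $\pi'$ one has $D(\pi')\circ\ast = \overline{\pi'}$, while choosing $\sigma$ unitary gives $\check\sigma = \overline\sigma$ and hence $\check\pi = \Ind_{G_e}\overline\sigma = \overline{\pi}$ as matrix representations. The paper instead passes through $J_e^0$, invokes Lemma \ref{astrepr1} to get $\check\pi|_{J_e^0} \simeq (D(\pi)\circ\ast)|_{J_e^0}$, and then upgrades this to all of $S$ via semigroup character theory. Your route avoids that detour.

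The backward direction, however, has the gap you yourself flagged, and your proposed fix (tracing the Sch\"utzenberger formulas on $\mathcal{J}$-classes above $J_e$) is not how the paper proceeds and would be awkward to make precise. The paper's key move is to define $\pi''(s) = \pi(s^\ast)^\ast$ \emph{globally} on $S$. This is automatically a representation of $S$; one checks easily that it is irreducible with the same apex $e$. From Lemma \ref{astrepr1} applied to $J_e^0$ one obtains only the character equality $\tr\pi(s) = \tr\pi''(s)$ for $s \in J_e^0$. But now the character theory of finite semigroups (\cite[Prop.~7.14, Thm.~5.5]{Stein}) says that two irreducibles with the same apex and equal characters on that apex class are isomorphic, so $\pi \simeq \pi''$ on all of $S$. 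This produces a single global intertwiner $A$ with $A\pi(s^\ast) = \pi(s)^\ast A$ for every $s\in S$; one then scales $A$ to be Hermitian ($A^{-1}A^\ast = c\,\Id$ by Schur, replace $A$ by $dA$ with $d/\overline d = c$), factors $A = B^\ast B$, and sets $\pi' = B\pi B^{-1}$.

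The point is that the explicit $A = \sigma(P)\omega$ you borrowed from Lemma \ref{astrepr1} is only known to work on $J_e^0$, and rather than extend it by hand the paper replaces it with an intertwiner whose existence on all of $S$ is guaranteed by character theory.
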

For the  compact semigroup, one can see  \cite{HaHaSt1}, \cite{HaHaSt2}.
\section{The proofs}\label{proofs}
\subsection{The proof of  the theorem \ref{invconditions1}: } After Munn's result(Lemma \ref{inv}), it suffices to show the $``\Leftarrow''$ part. Let $\emptyset=I_0 \subsetneq I_1 \subsetneq \cdots  \subsetneq I_n=S$ be  a principal series in  $S$. Assume  $I_{i}\setminus I_{i-1}=J_{a}$. Keep the notations of Section \ref{aprinser}.  Let $S_a=\mathcal{M}^0( I, J, G_a, P)$. Assume that $\pi=\Ind_{G_a}(\sigma)$, $\pi'=\Coind_{G_a}(\sigma)$ both are  semiunitary  Sch\"utzenberger representations. Then the restrictions of $\pi$, $\pi'$  to $S_a$ are also  semiunitary representations. By Lemma \ref{astrepr}(1), $S_a$ is an inverse semigroup.   So for any $b\in S_a$, there exists a unique element, say $b^{\ast}$, such that $bb^{\ast}b=b$, and $b^{\ast}bb^{\ast}=b^{\ast}$. If there exists another $b'\in S$ such that $bb'b=b$, and $b'bb'=b'$, then $S^1bS^1=S^1bb'bS^1\subseteq S^1b'S^1=S^1b'bb'S^1\subseteq S^1bS^1$. Hence $b'\mathcal{J}b$, $b'\in J_a$. By the uniqueness in $J_a$, $b'=b^{\ast}$.
\subsection{The proof of  the theorem \ref{invconditions2}: }
By above theorem, $S$ is an inverse semigroup. By Lemma \ref{astrepr}, for any $b\in J_a$, $b^{\ast}$ is the inverse of $b$ in $J_a$ as well as $S$. Hence the result holds.
\subsection{The proof of Remark \ref{invcoro1}}
Assume that $\ast$, $\ast'$ are two involutions on $S$ such that $(S, \ast)$, $(S, \ast')$ both are inverse semigroups with the inverse maps $\ast$ and $\ast'$ respectively.  For $e\in E(S)$, $e^{\ast}=e=e^{\ast'}$. For $g\in G_e$, $g^{\ast}=g^{-1}=g^{\ast'}$. Assume $L_e=\sum_{i=1}^{s_e} x_i\circ_e G_e$. Then  $L_e^{\ast}=R_{e^{\ast}}=R_e=L_e^{\ast '}$, $L_e^{\ast}= \sum_{i=1}^{s_e} G_e\circ_e x_i^{\ast}=\sum_{i=1}^{s_e} G_e\circ_e x_i^{\ast'}=L_e^{\ast '}$. For $j\neq i$,  $gx_j^{\ast}x_i\notin J_e$, and $gx_i^{\ast} x_i=g$, for $g\in G_e$. So $x_i^{\ast}$ is the unique element of $R_e$, such that $x_i^{\ast} x_i=e$. Hence $x_i^{\ast}=x_i^{\ast'}$, $(gx_i)^{\ast}=x_i^{\ast}g^{-1}=x_i^{\ast'}g^{-1}=(gx_i)^{\ast'}$. Since $S$ is  regular, every element lies in   $L_{f}$, for some $f\in E(S)$. Hence the result holds.

\subsection{The proof of the theorem \ref{fini}.  }
For $e\in E(S)$, $S^1e=S^1x$ iff $e^{\ast}S^1=x^{\ast}S^1$, so $(L_e)^{\ast}=R_{e^{\ast}}$. Similarly, $(R_e)^{\ast}=L_{e^{\ast}}$. Then $\ast: G_e \simeq G_{e^{\ast}}$. For $s\in S$, $e\notin S^1sS^1$ iff $e^{\ast}\notin S^1s^{\ast}S^1$. Hence $s\in I_e=\{ a\in S\mid e\notin S^1aS^1\}$ iff $s^{\ast} \in I_{e^{\ast}}=\{ a\in S\mid e^{\ast}\notin S^1aS^1\}$.

$``\Rightarrow ":$ By Lemma \ref{comre}, $\C[S]$ is a  semi-simple algebra. Let $(\pi, V)$ be an irreducible representation of $S$ having an apex $e$.  Assume  that $\pi=\Ind_{G_e} \sigma, V=\Ind_{G_e} W$. Then $\check{\pi}=\Ind_{G_e} \check{\sigma}$,  $\check{V}=\Ind_{G_e} \check{W}$; $D(\pi)=D(\sigma)\otimes_{\C[G_e]} \C[R_e]$, $D(V)=D(W)\otimes_{\C[G_e]} \C[R_e]$.

 Let us write $L_e=\sqcup_{i=1}^{s_e} x_i\circ_e G_e$, $R_e=\sqcup_{j=1}^{t_e} G_e\circ_e y_j$, $J_e=\sqcup_{i=1}^{s_e}\sqcup_{j=1}^{t_e} x_i\circ_e G_e \circ_e y_j$,  $p_{ji}= y_j \Diamond x_i \in G_e^0$. These elements $p_{ji}$  form  a sandwich matrix  $P=(p_{ji})$ of type $t_e\times s_e$. Since $\C[S]$ is semi-simple, $P$ is a non-singular matrix over $\C[G_e]$. Recall that  $J_e^0$  is isomorphic to $S_e= \mathcal{M}^0( I, J, G_e, P)$ by the previous construction.

Assume that $(\pi, V)$ is isomorphic to a $\ast$-representation $(\pi', V')$.  Then $I_e=\Ann_S(V)=\Ann_S(V')=(I_e)^{\ast}=I_{e^{\ast}}$, $eV'\neq 0$ implies that $e^{\ast}V'\neq 0$.  Hence $J_e=J_{e^{\ast}}$.
Hence $\ast$ defines an involution on $J_e^0$. The restriction  of $\pi$ to $J_e^0$ or $S_e$ is also isomorphic to a  $\ast$-representation.  By Lemma \ref{astrepr1}, $\check{\pi}|_{J_e^0} \simeq [D(\pi)\circ \ast]|_{J_e^0}$.  Note that $\check{\pi}$ and $D(\pi)\circ \ast$ both are irreducible representations having the apex $e$. Applying the character theory of semigroups to our situation (cf. \cite[Prop. 7.14]{Stein} or \cite{MaQuSt}), $\check{\pi}\simeq D(\pi)\circ \ast$ by \cite[Thm.5.5]{Stein}.

$``\Leftarrow ":$ By semi-simplicity, it reduces to  the irreducible matrix representations.   For then, keep the above notations. Assume $V=\C^m$.  Then   $(\check{\pi}, \check{V})$ or  $(D(\pi), D(V))$ has  an apex $e$.  Hence for the left representation $(D(\pi)\circ \ast, D(V))$, $\Ann_{S}(  D(V))=(I_{e})^{\ast}=  I_{e^{\ast}}$, and $[D(\pi)\circ \ast] (e^{\ast}) (D(V))\neq 0$. So $D(\pi)\circ \ast$ has an apex $e^{\ast}$. Hence  $I_e=I_{e^{\ast}}$, and $e\mathcal{J} e^{\ast}$. Moreover, $J_{e}=J_{e^{\ast}}$. By Lemma \ref{astrepr1}, the restriction  of $\pi$ to $J_e^0$  is isomorphic to a  $\ast$-representation. Hence $\tr \pi(s)=\tr \pi(s^{\ast})^{\ast}$, for any $s\in J_e^0$. Let $\pi''(s)=\pi(s^{\ast})^{\ast}$, which also defines an irreducible representation of $S$. For $s\in S$, $\pi''(s)=0$ iff $\pi(s^{\ast})=0$ iff $s\in (I_e)^{\ast}=I_e$. Moreover, $\pi''(e)=\pi(e^{\ast})^{\ast}\neq 0$. Hence $\pi''$ also  has an apex $e$.  Applying the character theory(cf. \cite[Prop. 7.14]{Stein} or \cite{MaQuSt}, \cite[Thm.5.5]{Stein}) to the equality: $\tr \pi(s)=\tr \pi(s^{\ast})^{\ast}$, $s\in J_e^0$, we obtain $\pi''\simeq \pi$.  Therefore there exists a non-singular matrix $A\in \GL_m(\C)$, such that $A\pi(s^{\ast})=\pi(s)^{\ast}A$, for all $s\in S$.  Then  $\pi(s^{\ast})=A^{-1}\pi(s)^{\ast}A$, $\pi(s^{\ast})^{\ast}=A^{\ast}\pi(s)(A^{-1})^{\ast}$, $\pi(s)^{\ast}=A^{\ast}\pi(s^{\ast})(A^{-1})^{\ast}$. Therefore,
$\pi(s^{\ast})=A^{-1}\pi(s)^{\ast}A=A^{-1}A^{\ast}\pi(s^{\ast})(A^{-1})^{\ast}A$. Since $\pi$ is irreducible, $A^{-1}A^{\ast}=c\Id$, for some $c\in \C^{\times}$, and $\|c\|=1$. Assume $c=d/\overline{d}$. Then $(dA)^{\ast}=\overline{d}cA=dA$. By replacing $A$ with $dA$,  we assume that  the above $A$ is a Hermitian matrix. Let  $A=B^{\ast}B$, for some $B\in \GL_m(\C)$. Put $\pi'(s) =B\pi(s)B^{-1}$. Then $\pi'(s^{\ast})=B\pi(s^{\ast})B^{-1}=BA^{-1}\pi(s)^{\ast}AB^{-1}=(B^{-1})^{\ast}\pi(s)^{\ast}B^{\ast}=\pi'(s)^{\ast}$. So $\pi'$ is a $\ast$-representation.

\setcounter{secnumdepth}{5}

 \labelwidth=4em
\addtolength\leftskip{25pt}
\setlength\labelsep{0pt}
\addtolength\parskip{\smallskipamount}

\end{document}